\numberwithin{figure}{section} 
\newcommand{\field}[1]{\mathbb{#1}} 
\newcommand{\Z}{\field{Z}}
\newcommand{\rk}{\operatorname{rank}}
\newcommand{\Pic}{\operatorname{Pic}}
\newcommand{\MW}{\operatorname{MW}}
\newcommand{\NS}{\operatorname{NS}}
\newtheorem{theorem}{Theorem}[section]
\newtheorem{prop}[theorem]{Proposition}
\newtheorem{lemma}[theorem]{Lemma}
\newtheorem{cor}[theorem]{Corollary}
\newtheorem{conj}[theorem]{Conjecture}
\theoremstyle{definition}
\newtheorem{definition}[theorem]{Definition}
\newtheorem{rem}[theorem]{Remark}
\newtheorem{example}[theorem]{Example}
\newcommand{\bea}{\begin{eqnarray}} 
\newcommand{\eea}{\end{eqnarray}} 
\newcommand{\be}{\begin{equation}} 
\newcommand{\ee}{\end{equation}} 
\newcommand{\benn}{\begin{equation*}} 
\newcommand{\eenn}{\end{equation*}}
\newcommand{\cE}{\mathscr{E}}
\newcommand{\cI}{\mathscr{I}}
\newcommand{\cL}{\mathscr{L}}
\newcommand{\cO}{\mathscr{O}}
\newcommand{\IC}{\mathbb{C}}
\newcommand{\IP}{\mathbb{P}}
\newcommand{\IQ}{\mathbb{Q}}
\newcommand{\IZ}{\mathbb{Z}}
\title[LTP for elliptic CY]{On a Lefschetz-type phenomenon for elliptic Calabi--Yaus}
\author{James Fullwood}
\address{James Fullwood\\School of Mathematical Sciences\\Shanghai Jiao Tong University\\ 800 Dongchuan Road, Shanghai, China}
\email{fullwood@sjtu.edu.cn}
\author{Andrea Cattaneo}
\address{Andrea Cattaneo\\
Dipartimento di Scienze Matematiche, Fisiche e Informatiche\\
Unit\`a di Matematica e Informatica\\
Universit\`a degli Studi di Parma\\
Parco Area delle Scienze 53/A, 43124\\
Parma, Italy}
\email{andrea.cattaneo@unipr.it}
\subjclass[2010] {14D06, 14J27, 14J30, 14J32, 14J35, 14J81}
\keywords{Elliptic fibration, Calabi--Yau manifolds, Lefschetz hyperplane theorem.}
\begin{document}

\begin{abstract}
We consider 18 families of elliptic Calabi--Yaus which arise in constructing $F$-theory compactifications of string vacua, and show in each case that the upper Hodge diamond of a crepant resolution of the associated Weierstrass model coincides with the upper Hodge diamond of the (blown up) projective bundle in which the crepant resolution is naturally embedded. Such results are unexpected, as each crepant resolution we consider does \emph{not} satisfy the hypotheses of the Lefschetz hyperplane theorem. In light of such findings, we suspect that all ellipitic Calabi--Yaus satisfy such a `Lefschetz-type phenomenon'.
\end{abstract}

\maketitle

\tableofcontents

\section{Introduction}
The Hodge numbers of a complex algebraic variety are fundamental invariants which in general are difficult to compute. In the context of string theory, the Hodge numbers of the compactified dimensions of string vacua determine the associated particle spectrum, and as such, play a significant role in determining which vacua have the potential for the realization of realistic particle physics. Since the discovery in the 1980s that Calabi--Yau manifolds yield natural geometries for the compactified dimensions of string vacua, string theorists have been particularly interested in computing the Hodge numbers of Calabi--Yau 3- and 4-folds, giving rise to a cross-fertilization between complex geometry and physics whose cultivation has yielded various branches of `mirror symmetry'. And while much is known about the Hodge numbers of low dimensional Calabi--Yaus (particularly when embedded as a complete intersection in a toric variety), there are still many classes of examples in which standard methods for computing Hodge numbers are not adequate to determine the whole Hodge diamond. In particular, elliptic Calabi--Yau 3- and 4-folds are a case of special interest, as investigating the Hodge structure of such varieties remains an active topic of current research \cite{GW11}\cite{CGP}\cite{Esole}\cite{FvH17}\cite{KLOOS}\cite{Taylor1}\cite{Taylor2}\cite{SHISH}.

In this letter, we consider elliptic Calabi--Yau varieties which arise in constructing $F$-theory compactifications of string vacua \cite{VFT}\cite{GTSGUT}\cite{KLRY}\cite{MV1}\cite{MV2}\cite{DW17}\cite{BHV1}\cite{BHV2}, which is promising candidate for realizing the standard model within the framework of string theory \cite{TT17}\cite{RTT19}\cite{AL57}\cite{CHLL1}\cite{WLL19}. A total of 18 families of elliptic Calabi--Yaus are considered, consisting of fourteen 3-fold families with simple gauge groups and four 4-fold families, two of which have non-trivial Mordell-Weil rank \cite{MMTW1}\cite{MDSP1}. For the 3-fold families we work over a base consisting of an arbitrary rational surface, while for the 4-fold families we work over a base which is an arbitrary Fano toric 3-fold (of which there are 18). Given such an elliptic Calabi--Yau $X\to B$, the choice of a section $B\hookrightarrow X$ yields a birational model $W_X\to B$ referred to as a \emph{Weierstrass model} of $X$, which is obtained by contracting the irreducible components of the singular fibers of $X\to B$ not meeting the image of the section $B\hookrightarrow X$. The Weierstrass model $W_X$ is in general singular, and is given by a global Weierstrass equation in the total space of a $\mathbb{P}^2$-bundle $\mathbb{P}(\mathscr{E})\to B$. We then consider a crepant resolution $\widetilde{W}_X\to W_X$, which is obtained by taking the proper transform of $W_X$ with respect to a birational map $\widetilde{\mathbb{P}(\mathscr{E})}\to \mathbb{P}(\mathscr{E})$ corresponding to a sequence of blowups along smooth centers. What we show in this letter is that in all cases under consideration, the upper Hodge diamond of $\widetilde{W}_X$ coincides with the upper Hodge diamond of the blown up projective bundle $\widetilde{\mathbb{P}(\mathscr{E})}$ in which $\widetilde{W}_X$ is naturally embedded. This result is unexpected, as we show in all cases that $\widetilde{W}_X$ does \emph{not} satisfy the hypotheses of the Lefschetz hyperplane theorem (which would imply such a match between the upper Hodge numbers of $\widetilde{W}_X$ and $\widetilde{\mathbb{P}(\mathscr{E})}$). While there is in fact a generalization of the Lefschetz hyperplane theorem to line bundles which are `lef' by de Cataldo and Migliorini \cite{DeCataldoMigliorini}, in \S\ref{sect: non-lef} we prove a general result showing that in such a context $\mathscr{O}(\widetilde{W}_X)$ cannot be lef.

In light of our results, we highly suspect that \emph{all} (or at least a large class of) elliptic Calabi--Yaus satisfy such a `Lefschetz-type phenomenon', or simply `LTP' for short. If the LTP conjecture is in fact true, then it would provide a tractaable avenue for computing the full Hodge diamond of a general elliptic Calabi--Yau $X\to B$. In particular, the birational invariance of the Hodge numbers of Calabi--Yau varieties implies that if $\widetilde{W}_X\to W_X$ is a crepant resolution of the Weierstrass model of $X$, then the Hodge numbers of $X$ coincide with those of $\widetilde{W}_X$. As such, computing the Hodge numbers of $X$ then reduces to computing the Hodge numbers of the crepant resolution $\widetilde{W}_X$, which is naturally embedded as a hypersurface in a projective bundle. Crepant resolutions of Weierstrass models have been studied in depth in recent years \cite{Esole}\cite{EsoleYau}\cite{Sakura}\cite{SNBG1}\cite{SNBG2}\cite{SNBG3}\cite{MSN1}\cite{KMW}, so if LTP in fact holds, then one can combine such resolution procedures together with Hirzebruch-Riemann-Roch to obtain explicit formulas for all non-trivial Hodge numbers of $X$.

As the LTP condition for an elliptic Calabi--Yau $X\to B$ is purely numerical, it is natural to surmise that if $\widetilde{W}_X\to W_X$ is a crepant resolution of the Weierstrass model of $X$, then the inclusion $\widetilde{W}_X\hookrightarrow \widetilde{\mathbb{P}(\cE)}$ induces an isomorphism of Hodge structures for $p+q<\text{dim}(X)$. However, in \S\ref{LTPCON} we prove that this more general statement is false (see Proposition~\ref{CE119}), as we show that fibrations with non-trivial Mordell-Weil groups which admit sections with torsion will always provide counterexamples. As such, we suspect that LTP may be a consequence of the inclusion $\widetilde{W}\hookrightarrow \widetilde{\mathbb{P}(\cE)}$ inducing an isomorphism of \emph{rational} Hodge structures (we formulate a precise conjecture in \S\ref{LTPCON}). Furthermore, the Calabi--Yau assumption on $X$ seems to be crucial for $\text{dim}(X)>2$, since in \S\ref{CYCON} we construct examples non-Calabi--Yau elliptic fibrations over a base of arbitrary dimension which \emph{do not} satisfy the Lefschetz-type phenomenon. 

\addtocontents{toc}{\protect\setcounter{tocdepth}{0}}

\subsection*{Notation and conventions}
Given a vector bundle $\mathscr{E}\to B$ over a variety $B$, $\mathbb{P}(\mathscr{E})\to B$ will always be taken to denote the associated projective bundle of \emph{lines} in $\mathscr{E}$, and the tautological line bundle of $\mathbb{P}(\cE)$ and its dual will be denoted by $\cO(-1)$ and $\cO(1)$ respectively. Given a line bundle $\mathscr{L}\to B$, its $m$-th tensor power will be denoted $\mathscr{L}^m\to B$. The canonical bundle of a smooth variety $X$ will be denoted $\omega_X\to X$.

\subsection*{Acknowledgments}
Andrea Cattaneo is a member of GNSAGA of INdAM. He would also like to thank the School of Mathematical Sciences at Shanghai Jiao Tong University for the kind hospitality during September 2018, where the ideas leading to this work were first established.

\addtocontents{toc}{\protect\setcounter{tocdepth}{2}}

\section{Setting the stage for LTP}
\subsection{Some preliminaries}
Let $B$ be a smooth compact complex projective variety. 

\begin{definition}
A proper, flat, surjective morphism $\pi:X\to B$ with connected fibers will be referred to as an \emph{elliptic fibration} if and only if the generic fiber of $\pi$ is a smooth curve of genus 1, and the morphism $\pi$ admits a regular section $B\hookrightarrow X$.
\end{definition}

The singular fibers of an elliptic fibration $\pi:X\to B$ reside over a closed subscheme $\Delta_X\hookrightarrow B$, referred to as the \emph{discriminant} of $\pi:X\to B$. By contracting the irreducible components of the singular fibers of $\pi:X\to B$ which do not meet the section, we obtain a birational morphism $F:X\to W_X$ such that following diagram
\[
\xymatrix{X \ar[rr]^F \ar[dr]_{\pi} & & W_X \ar[dl]^{\psi}  \\
 & B & }
\]
commutes. The fibration $\psi:W_X\to B$ is then referred to as the \emph{Weierstrass model} of $X$.

\begin{rem}
Note that in the definition of elliptic fibration the total space is not assumed to be smooth. The flatness condition ensures that an elliptic fibration has equidimensional fibres, and observe that if the total space of the fibration is smooth then the converse also holds (see, e.g., \cite[Criterion for flatness]{Nowak}).
\end{rem}

\begin{definition} Let $\pi: X \rightarrow B$ be an elliptic fibration. The \emph{fundamental line bundle} of $\pi$ is the line bundle $\mathscr{L}\to B$ given by $\cL = \left( R^1 \pi_* \cO_X \right)^{-1}$.
\end{definition}

\begin{prop}[{cf.\ \cite[Theorem 2.1]{Nakayama}}]\label{WMX97}
Let $\pi: X \rightarrow B$ be an elliptic fibration, let $\mathscr{L}\to B$ denote the fundamental line bundle of $\pi$, and let $p:\IP(\cO_B\oplus \cL^2 \oplus \cL^{3})\to B$ be the structure map. Then the Weierstrass model $W_X$ of $X$ is naturally embedded as a hypersurface in the projective bundle $\IP(\cO_B\oplus \cL^2 \oplus \cL^{3})$, given by the equation
\begin{equation}\label{Weq}
W_X:(y^2z=x^3+fxz^2+gz^3)\subset \IP(\cO_B\oplus \cL^{2} \oplus \cL^{3}),
\end{equation}
where $f$ is a section of $p^*\cL^4$, $g$ is a section of $p^*\cL^6$ and $z$ is a section of $\mathscr{O}(1)$.
\end{prop}

The Weierstrass model of an elliptic fibration is a special case of a Weierstrass fibration, whose definition we now recall.

\begin{definition}
An elliptic fibration $\psi:W\to B$ will be referred to as a \emph{Weierstrass fibration} if and only if $W$ may be given by a global Weierstrass equation
\[
W:(y^2z=x^3+fxz^2+gz^3)\subset \IP(\cO_B\oplus \cL^{2} \oplus \cL^{3}),
\]
where $\mathscr{L}$ is the fundamental line bundle of $\psi$, $f$ is a section of $p^*\mathscr{L}^4$, $g$ is a section of $p^*\mathscr{L}^6$, $z$ is a section of $\mathscr{O}(1)$ and $p:\IP(\cO_B\oplus \cL^{2} \oplus \cL^{3})\to B$ is the structure map. If $W$ is in fact an anti-canonical divisor in $\IP(\cO_B\oplus \cL^{2} \oplus \cL^{3})$, then $W$ will be referred to as an \emph{anti-canonical Weierstrass fibration}.
\end{definition}

\begin{rem} We note that while all Weierstrass models are Weierstrass fibrations, there exist Weierstrass fibrations which are not Weierstrass models of smooth elliptic fibrations.
\end{rem}

\begin{prop}\label{LSX913}
Let $\psi:W\to B$ be a a Weierstrass fibration, and let $\mathscr{L}$ denote the fundamental line bundle of $\psi$. Then the following statements hold.
\begin{enumerate}
\item\label{VLX1}
$\mathscr{O}(W)=p^*\mathscr{L}^6\otimes \mathscr{O}(3)$.
\item\label{VLX2}
$W$ is an anti-canonical Weierstrass fibration if and only if $\mathscr{L}=\omega_B^{-1}$.
\end{enumerate}
\end{prop}
 
\begin{proof}
Let $\cE = \cO_B\oplus \cL^{2} \oplus \cL^{3}$ and $p:\IP(\cE)\to B$ be the structure map. Since $g$ is a section of $p^*\mathscr{L}^6$ and $z$ is a section of $\mathscr{O}(1)$, it follows that $gz^3$ is a section of $p^*\mathscr{L}^6\otimes \mathscr{O}(3)$. And since $W$ is the zero-scheme of the section $y^2z-(x^3+fxz^2+gz^3)$, it follows that $\mathscr{O}(W)=p^*\mathscr{L}^6\otimes \mathscr{O}(3)$, which proves item \ref{VLX1}.

To prove item \ref{VLX2}, we consider two exact sequences over $\IP(\cE)$, namely, the exact sequence defining the relative cotangent bundle
\[0 \rightarrow p^* \Omega^1_B \rightarrow \Omega^1_{\IP(\cE)} \rightarrow \Omega_{\IP(\cE)|B} \rightarrow 0\]
and the relative Euler sequence (see \cite[$\S$B.5.8]{Fulton})
\[0 \rightarrow \Omega_{\IP(\cE)|B} \rightarrow \cO_{\IP(\cE)}(-1) \otimes p^* \cE^\vee \rightarrow \cO_{\IP(\cE)} \rightarrow 0.\]
We then have
\begin{equation}\label{FCCPB}
\begin{array}{rl}
\omega_{\IP(\cE)} = & \det \Omega^1_{\IP(\cE)}\\
= & p^* \omega_B \otimes \det \Omega_{\IP(\cE)|B}\\
= & p^* \omega_B \otimes \det \left( \cO_{\IP(\cE)}(-1) \otimes p^* \cE^\vee \right)\\
= & p^* \omega_B \otimes \cO_{\IP(\cE)}(-3) \otimes \det p^* \cE^\vee\\
= & \cO_{\IP(\cE)}(-3) \otimes p^* \left( \omega_B \otimes \det \cE^\vee \right)\\
= & \cO_{\IP(\cE)}(-3) \otimes p^* \left( \omega_B \otimes \cL^{-5} \right).
\end{array}
\end{equation}
It then follows from item \ref{VLX1} and the injectivity of $p^*$ that $\cO(W) = \omega_{\IP(\cE)^{-1}}$ if and only if $\cL = \omega_B^{-1}$.

\end{proof}

\begin{definition}
Let $X$ be a smooth, projective, complex algebraic variety, and suppose $p$ and $q$ are non-negative integers such that $p+q\leq \text{dim}(X)$. Then the \emph{Hodge number} $h^{p,q}(X)$ is the non-negative integer given by
\[
h^{p,q}(X)=\text{dim}\left(H^q(X,\Omega^p)\right).
\]
\end{definition}

\begin{definition}
A resolution of singularities $f:\widetilde{X}\to X$ will be referred to as \emph{crepant} if and only if $f$ is a birational map such that $K_{\widetilde{X}}=f^*K_X$.
\end{definition}

\begin{definition}
Let $X$ be a smooth, projective, complex algebraic variety. Then $X$ is said to be \emph{Calabi--Yau} if and only if the following conditions hold:
\begin{enumerate}
\item
$\omega_X=\cO_X$.
\item 
$h^{0,0}(X) = h^{\dim(X), 0}(X) = 1$.
\item
$h^{p,0}(X) = 0$ for $0<p<\text{dim}(X)$.
\end{enumerate}
\end{definition}

\begin{definition}
An elliptic fibration $\pi:X\to B$ will be referred to as an \emph{elliptic Calabi--Yau} if and only if $X$ is Calabi--Yau.
\end{definition}

\begin{prop}\label{prop: fund line bund of CY}
Let $\pi:X \to B$ be an elliptic Calabi--Yau, and let $\mathscr{L}$ denote the fundamental line bundle of $\pi$. Then the following statements hold.
\begin{enumerate}
\item\label{SLX1}
$\mathscr{L}$ is the anti-canonical bundle of $B$, i.e., $\mathscr{L}=\omega_B^{-1}$.
\item\label{SLX2}
The associated Weierstrass model $\psi: W_X\to B$ is an anti-canonical Weierstrass fibration.
\item\label{SLX3} The morphism to the Weierstrass model $f: X \rightarrow W_X$ is a crepant resolution.
\end{enumerate}
\end{prop}

\begin{proof}
Since $X$ is Calabi--Yau $\omega_X=\mathscr{O}_X$, thus $\cL^{-1} = R^1 \pi_* \cO_X = R^1 \pi_* \omega_X$. By the projection formula and \cite[Proposition 7.6]{Kollar} we then have
\[\begin{array}{rl}
R^1 \pi_* \omega_X = & R^1 \pi_* \left( \omega_{X|B} \otimes \pi^* \omega_B \right) \\
= & R^1 \pi_* \omega_{X|B} \otimes \omega_B \\
= & \cO_B \otimes \omega_B \\
= & \omega_B,
\end{array}\]
from which item \ref{SLX1} follows. Item \ref{SLX2} then follows from item \ref{SLX1} together with item \ref{VLX2} of Proposition~\ref{LSX913}. To prove item \ref{SLX3}, we first note that the dualizing sheaf $\omega_{W_X}$ of $W_X$ is such that $\omega_{W_X}=\psi^*(\omega_B \otimes \cL)$  (see, e.g., \cite[p. 409]{Nakayama}), thus
\[
\omega_{W_X} = \psi^*(\omega_B \otimes \cL) = \psi^*(\omega_B \otimes \omega_B^{-1}) = \psi^*(\mathscr{O}_B)=\cO_{W_X},
\]
where the second equality follows from item \ref{SLX1}. We then have
\[
f^* \omega_{W_X} = f^* \cO_{W_X} = \cO_X = \omega_X,
\] 
thus $f$ is crepant.
\end{proof}

\begin{rem}
In light of item \ref{SLX3} of Proposition~\ref{prop: fund line bund of CY}, the Hodge numbers of an elliptic Calabi--Yau coincide with the \emph{stringy Hodge numbers} of its Weierstrass model (as defined by Batyrev \cite{SHNB}). 
\end{rem}

\begin{prop}\label{PLX17}
Let $\pi:X\to B$ be an elliptic Calabi--Yau, and suppose $\widetilde{W}_X\to W_X$ is a crepant resolution of the Weierstrass model of $X$. Then $\widetilde{W}_X$ is Calabi--Yau. 
\end{prop}
\begin{proof}
By item \ref{VLX2} of Proposition~\ref{prop: fund line bund of CY} $W_X$ is an anti-canonical Weierstrass fibration, hence by adjunction it follows that the dualizing sheaf $\omega_{W_X}$ is trivial, thus $\omega_{\widetilde{W_X}} = \cO_{\widetilde{W_X}}$ (since the resolution $\widetilde{W}_X\to W_X$ is crepant). Moreover, since the Hodge numbers $h^{p, 0}$ are known to be birational invariants, and $X$ is birational to $\widetilde{W}_X$, it follows that $h^{p, 0}(\widetilde{W}_X)=h^{p,0}(X)$ for $p=0,...,\text{dim}(X)$, thus $\widetilde{W}_X$ is Calabi--Yau. 
\end{proof}

In 1995 Kontsevich created the theory motivic integration for the purpose of proving the following result \cite{Kontsevich}.

\begin{theorem}
\label{KLO}
If $X\to Y$ is a birational map between Calabi--Yau varieties, then the Hodge numbers of $X$ and $Y$ coincide. 
\end{theorem}

\begin{cor}\label{CX1983}
Let $\pi:X\to B$ be an elliptic Calabi--Yau, and suppose $\widetilde{W}_X\to W_X$ is a crepant resolution of the Weierstrass model of $X$. Then the Hodge numbers of $X$ and $\widetilde{W}_X$ coincide. 
\end{cor}

\begin{proof}
The statement follows immediately from Proposition~\ref{PLX17} and Theorem~\ref{KLO}.
\end{proof}

\begin{definition}[LTP for elliptic Calabi--Yaus]
\label{D91SX}
Let $\pi:X\to B$ be an elliptic Calabi--Yau, and let $\mathscr{E}=\mathscr{O}_B\oplus \omega_B^{-2}\oplus \omega_B^{-3}$. Then $X$ is said to satisfy a \emph{Lefschetz-type phenomenon}, or \emph{LTP} for short, if and only if the following conditions hold.
\begin{enumerate}
\item\label{LTP1}
There exists a birational map $\widetilde{\mathbb{P}(\mathscr{E})}\overset{\text{LTP}}\longrightarrow \mathbb{P}(\mathscr{E})$ such that the proper transform of the Weierstrass model of $X$ yields a crepant resolution $\widetilde{W}_X\to W_X$. In such a case, the map $\widetilde{\mathbb{P}(\mathscr{E})}\overset{\text{LTP}}\longrightarrow \mathbb{P}(\mathscr{E})$ will be referred to as the \emph{LTP map}.
\item\label{LTP2}
The associated map $\widetilde{W}_X\to B$ endows $\widetilde{W}_X$ with the structure of an elliptic Calabi--Yau.
\item\label{LTP3}
$h^{p,q}(\widetilde{W}_X)=h^{p,q}(\widetilde{\mathbb{P}(\cE)})$ for $p+q<\text{dim}(X)$.
\end{enumerate}
\end{definition}

\begin{rem}
By Proposition~\ref{PLX17} $\widetilde{W}_X$ is necessarily Calabi--Yau, so condition \ref{LTP2} in the definition of LTP is imposed to ensure the map $\widetilde{W}_X\to B$ endows $\widetilde{W}_X$ with the structure of an elliptic fibration. In particular, condition \ref{LTP2} ensures $\widetilde{W}_X\to B$ is flat, thus all fibers of $\widetilde{W}_X\to B$ are equidimensional.
\end{rem}

\subsection{Hodge numbers of the base}
In this section we prove that if $B$ is the base of an elliptic Calaibi--Yau, then $h^{p,0}(B) = 0$ for $p > 0$. In the case of elliptic Calabi--Yau threefolds it is known (see \cite[Main Theorem]{Oguiso}) that $B$ is a rational surface, and there are also results by Grassi in \cite[$\S$2 and $\S$3]{Grassi}.

\begin{lemma}\label{lemma: hdg inequality}
Let $\pi: X \rightarrow B$ be a fibration between smooth compact complex algebraic varieties \emph{(}i.e., a surjective morphism with connected fibers\emph{)}. Then $h^{0, k}(B) \leq h^{0, k}(X)$ for $k = 0, \ldots, \dim X$. 
\end{lemma}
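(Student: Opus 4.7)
The plan is to reduce the Dolbeault inequality to the injectivity of pullback on global holomorphic $k$-forms. On any compact K\"ahler manifold $Y$ Hodge symmetry gives
\[
h^{0,k}(Y) \;=\; h^{k,0}(Y) \;=\; \dim_{\IC} H^0(Y, \Omega_Y^k),
\]
and both $X$ and $B$ are K\"ahler in the applications of interest (where $X$ is Calabi--Yau and $B$ arises as the base of an elliptic fibration of $X$). It therefore suffices to exhibit an injection
\[
\pi^* \colon H^0(B, \Omega_B^k) \longhookrightarrow H^0(X, \Omega_X^k)
\]
for $0 \leq k \leq \dim B$; the inequality is trivial for $k > \dim B$, where $h^{0,k}(B) = 0$.

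For this I would invoke generic smoothness of the surjective holomorphic map $\pi$. The set where $d\pi$ fails to have rank $\dim B$ is cut out by the vanishing of maximal minors of $d\pi$, hence is an analytic subset $C \subseteq X$. Since $\pi$ is surjective, the generic rank of $d\pi$ equals $\dim \pi(X) = \dim B$, so $C \subsetneq X$ is proper; by Remmert's proper mapping theorem $\pi(C)$ is then a proper analytic subset of $B$, and $U := B \setminus \pi(C)$ is a dense analytic open subset over which $\pi$ is a submersion. Given $\alpha \in H^0(B, \Omega_B^k)$ with $\pi^*\alpha = 0$, for any $x \in \pi^{-1}(U)$ the differential $d\pi_x \colon T_x X \to T_{\pi(x)} B$ is surjective, so its transpose $(d\pi_x)^* \colon \Omega^k_{B, \pi(x)} \to \Omega^k_{X, x}$ is injective. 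This forces $\alpha_{\pi(x)} = 0$ for every such $x$, hence $\alpha|_U = 0$; by the identity principle on the connected complex manifold $B$, $\alpha$ vanishes identically. Thus $h^{k,0}(B) \leq h^{k,0}(X)$, and Hodge symmetry yields the claim.

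The main subtlety is the appeal to Hodge symmetry, which relies on the K\"ahler hypothesis that is implicit throughout the paper. If one wished to drop this hypothesis, the natural route would be to exploit $\pi_*\cO_X = \cO_B$ (valid by Stein factorization since fibers are connected and $B$ is normal) and to analyze the edge map $H^k(B, \cO_B) \to H^k(X, \cO_X)$ of the Leray spectral sequence; injectivity then reduces to the vanishing of the incoming differentials $d_r \colon E_r^{k-r, r-1} \to E_r^{k, 0}$, which is non-trivial to justify in that generality.
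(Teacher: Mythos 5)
Your proof is correct, but it takes a genuinely different route from the paper's. The paper argues via the Leray spectral sequence: it invokes a direct-sum decomposition $H^k(X,\cO_X)\simeq\bigoplus_{i=0}^k H^i(B,R^{k-i}\pi_*\cO_X)$ and reads off the $i=k$ summand, which equals $H^k(B,\cO_B)$ since $\pi_*\cO_X=\cO_B$. You instead pass through Hodge symmetry, $h^{0,k}=h^{k,0}=\dim H^0(\Omega^k)$, and prove injectivity of $\pi^*$ on global holomorphic $k$-forms by generic submersivity plus the identity principle; that part of your argument is elementary and airtight (and for $k>\dim B$ both sides vanish on $B$, as you note). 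The trade-off is exactly the one you flag: your route needs $X$ and $B$ to be K\"ahler (or at least in Fujiki's class $\cC$) for Hodge symmetry, a hypothesis absent from the lemma as stated but present in every application in the paper --- indeed $B$ embeds in $X$ via the zero-section, so it inherits the K\"ahler property from the Calabi--Yau $X$. Conversely, the paper's route works directly with $H^k(\cO)$ and so formally avoids K\"ahlerness, but the claimed splitting of the Leray spectral sequence is itself a nontrivial input (valid, e.g., for projective fibrations by Deligne--Koll\'ar-type decomposition theorems, not for arbitrary fibrations of compact complex manifolds); your closing remark about the incoming differentials $d_r\colon E_r^{k-r,r-1}\to E_r^{k,0}$ identifies precisely the point that the paper's one-line appeal to that decomposition glosses over. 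In short, each proof is clean modulo one standard but non-trivial black box: Hodge symmetry for yours, Leray degeneration for the paper's, and in the K\"ahler setting of the paper both are available.
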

\begin{proof}
It follows from the Leray spectral sequence that
\[H^k(X, \cO_X) = \bigoplus_{i = 0}^k H^i(B, R^{k - i}\pi_* \cO_X).\]
Considering the index $i = k$, the lemma follows from the fact that $\pi_* \cO_X = \cO_B$. 
\end{proof}

\begin{cor}\label{CSX971}
Let $\pi:X\to B$ be an elliptic Calabi--Yau. Then $h^{p, 0}(B) = 0$ for $p>0$.
\end{cor}

\begin{proof}
Since $X$ is Calabi--Yau we have $h^{p, 0}(X) = 0$ for $p = 1, \ldots, \dim(X) - 1=\dim(B)$. The statement then immediately follows from Lemma~\ref{lemma: hdg inequality}.
\end{proof}

\begin{rem}
As Fano varieties satisfy the condition $h^{p,0}(B)=0$ for $p>0$, they serve as natural bases for elliptic Calabi--Yaus.
\end{rem}

\subsection{Hodge numbers of a blown up projective bundle}
As we are primarily concerned with comparing the Hodge numbers of a crepant resolution $\widetilde{W}_X\to W_X$ of a Weierstrass model of an elliiptic Calabi--Yau $X\to B$ with that of the blown up projective bundle $\widetilde{\mathbb{P}(\cE)}$ in which $\widetilde{W}_X$ is naturally embedded, we now prove some results which will enable us to compute the Hodge numbers of $\widetilde{\mathbb{P}(\cE)}$.

\begin{definition}
Let $Z$ be a smooth, projective, complex algebraic variety. The \emph{Hodge--Deligne polynomial} of $Z$ is the polynomial $E_Z(u,v)\in \Z[u,v]$ given by 
\[
E_Z(u,v) = \sum_{p, q} (-1)^{p+q} h^{p, q}(Z) u^p v^q.
\]
\end{definition}

\begin{rem}
While the Hodge--Deligne polynomial is actually defined more generally for arbitrary (i.e., possibly singular) complex projective varieties in terms of mixed Hodge structures \cite{Deligne3}, we restrict to the smooth case as this is what is needed for our purposes. 
\end{rem}

\begin{example}
The Hodge-Deligne polynomial of projective space is given by
\begin{equation}\label{hdppn}
E_{\IP^n}(u,v) = 1 + uv + (uv)^2 + \cdots + (uv)^n.
\end{equation}
\end{example}

We record some well-known properties of Hodge--Deligne polynomials via the following proposition (see, e.g., \cite{Danilov})

\begin{prop}\label{PL91X}
The Hodge--Deligne polynomial satisfies the following properties:
\begin{enumerate}
    \item 
    If $X\subset Z$ is a closed subvariety with open complement $U\subset Z$, then 
\begin{equation}\label{scissor}
E_Z(u,v)=E_X(u,v)+E_U(u,v).
\end{equation}
\item
If $Z \to B$ is a Zariski locally trivial fibration with fiber $F$ then
\begin{equation}\label{fbf}
E_Z(u,v) = E_B(u,v) \cdot E_F(u,v).
\end{equation}
\item
If $\operatorname{Bl}_W Z \to Z$ denotes the blow up of $Z$ along a smooth subvariety $W$ of codimension $m + 1$, then
\begin{equation}\label{blowuphdp}
E_{\operatorname{Bl}_W Z}(u,v) = E_Z(u, v) + (uv + \cdots +(uv)^m) E_W(u,v).
\end{equation}
\end{enumerate}
\end{prop}

The following lemmas will be used repeatedly in later sections.

\begin{lemma}\label{lemma: hdg bundle}
Let $B$ be a smooth projective variety of dimension $n$, and let $ \IP(\cE) \rightarrow B$ be a $\IP^2$-bundle over $B$. Then the following statements hold.
\begin{enumerate}
\item\label{pbf}
$E_{\mathbb{P}(\cE)}(u, v)=(1+uv+(uv)^2)E_B(u,v)$;
 \item\label{NLX2}
 $h^{p, 0}(\mathbb{P}(\mathscr{E})) = h^{p, 0}(B)$ for $0 \leq p \leq n$;
 \item
 $h^{n + 1, 0}(\mathbb{P}(\mathscr{E})) = h^{n + 2, 0}(\mathbb{P}(\mathscr{E})) = 0$.
\end{enumerate}
\end{lemma}

\begin{proof}
The lemma follows directly from Proposition~\ref{PL91X}.
\end{proof}

\begin{lemma}\label{lemma: hdg blow up}
Let $Z$ be a a smooth projective variety of dimension $n$, and $W \subseteq Z$ a smooth subvariety of codimension $m \geq 2$. Then for $0 \leq p \leq n$ we have
\[
h^{p, 0}(\operatorname{Bl}_W Z) = h^{p, 0}(Z).
\]
\end{lemma}

\begin{proof}
The lemma follows directly from Proposition~\ref{PL91X}.
\end{proof}

\begin{lemma}\label{lemma: relevant hdg}
Let $\pi: X \rightarrow B$ be an elliptic Calabi--Yau, let $\mathscr{E}=\cO_B\oplus \omega_B^{-2} \oplus \omega_B^{-3}$, and suppose there exists a birational map $\widetilde{\mathbb{P}(\mathscr{E})}\to \mathbb{P}(\mathscr{E})$ corresponding to a sequence of blowups of $\mathbb{P}(\mathscr{E})$ along smooth centers such that the proper transform $\widetilde{W}_X$ of $W_X$ yields a crepant resolution $\widetilde{W}_X\to W_X$. Then
\[
h^{p, 0}(X) = h^{p, 0}(\widetilde{W_X}) = h^{p, 0}(\widetilde{\IP(\cE)}) = 0 \qquad \text{for } 1 \leq p \leq \emph{dim}(X) - 1.
\]
\end{lemma}
\begin{proof}
The varieties $X$ and $\widetilde{W_X}$ are both Calabi--Yau, so the equality $h^{p, 0}(X) = h^{p, 0}(\widetilde{W_X})$ holds by Theorem~\ref{KLO}. It follows from Lemma \ref{lemma: hdg inequality} that $h^{p, 0}(B) = 0$ for $1 \leq p \leq \text{dim}(X) - 1$. By Lemma \ref{lemma: hdg bundle} this implies that $h^{p, 0}(\cE) = 0$ for all $p$, hence that $h^{p, 0}(\widetilde{\IP(\cE)}) = 0$ for all $p$ as blowing up a smooth subvariety does not alter $h^{p,0}$ (Lemma \ref{lemma: hdg blow up}).
\end{proof}

\begin{rem}\label{RM1X}
In later sections, we will verify LTP in a number of 3- and 4-fold examples. As such, we note that from the definition of a Calabi--Yau variety and Lemma~\ref{lemma: relevant hdg}, it follows that the relevant Hodge numbers for LTP are $h^{1,1}$ in both the 3- and 4-fold cases, and also $h^{1,2}$ in the 4-fold cases. 
\end{rem}

\begin{lemma}\label{lemma: relevant hdg 4fold}
Let $\mathbb{P}\left(\mathscr{E}\right)\to B$ be a projective bundle, and suppose $\widetilde{\mathbb{P}(\mathscr{E})}\to \mathbb{P}(\mathscr{E})$ is a birational map corresponding to a sequence of $n$ blowups of $\mathbb{P}\left(\mathscr{E}\right)$ along smooth centers. Then
\begin{equation}\label{h11pb}
h^{1,1}(\widetilde{\mathbb{P}(\cE)})=h^{1,1}(B)+1+n, 
\end{equation}
and
\begin{equation}\label{h12pb}
h^{1,2}(\widetilde{\mathbb{P}(\cE)})=h^{1,2}(B)+h^{1,0}(X_0)+\cdots +h^{1,0}(X_{n-1}),
\end{equation}
where $X_{i-1}$ is the center of the $i$-th blow up.
\end{lemma}
\begin{proof}
The expressions in \eqref{h11pb} and \eqref{h12pb} follow directly from \eqref{fbf} and \eqref{blowuphdp}.
\end{proof}

\subsection{A Shioda--Tate--Wazir formula for elliptic Calabi--Yaus}
In this section we use the Shioda-Tate-Wazir formula for elliptic fibrations to prove a formula for the Hodge number $h^{1,1}$ of a general elliptic Calabi--Yau. Given a variety $X$, we use the notation $\rho(X)$ to denote the Picard number of $X$.

\begin{definition}
Let $\pi: X \rightarrow B$ be an elliptic fibration with $X$ smooth. The \emph{Mordell--Weil group} of $\pi$, denoted $\MW(\pi)$, is the group which can be equivalently defined either as the group of $\IC(B)$-rational points of the generic fibre of $\pi$ or as the group of rational sections of $\pi$, namely rational maps $s: B \dashrightarrow X$ such that $\pi \circ s = \operatorname{id}$ on the domain of $s$.
\end{definition}

\begin{definition}[{Fibral divisor, \cite[Definition 3.3]{Wazir}}]
Let $D$ be an effective divisor in the total space of an elliptic fibration $\pi: X \rightarrow B$. Then $D$ is said to be a \emph{fibral divisor} if and only if $\pi(D)$ is a divisor in $B$.
\end{definition}

\begin{theorem}[{Shioda--Tate--Wazir formula, cf.~\cite[Corollary 3.2]{Wazir}}]\label{STW}
Let $\pi:X\rightarrow B$ be a smooth elliptic fibration which is not birational to a product fibration. Then
\begin{equation}\label{STWF}
\rho(X) = \rho(B) + 1 + \Gamma + \rk \MW(\pi),
\end{equation}
where $\Gamma$ is the number of irreducible and reduced fibral divisors of $\pi$ not intersecting the zero-section of the fibration.
\end{theorem}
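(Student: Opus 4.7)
The plan is to introduce the ``trivial lattice'' subgroup $T\subseteq \NS(X)$ generated by $\pi^*\NS(B)$, the class $[\Sigma]$ of the zero-section, and the classes $[F_1],\ldots,[F_\Gamma]$ of the irreducible reduced fibral divisors not meeting $\Sigma$, and then to prove two things: that $\rk T = \rho(B)+1+\Gamma$, and that $\NS(X)/T$ has rank $\rk \MW(\pi)$. These combine to give the asserted formula.

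For the first assertion, I would argue by rank-counting using intersection pairings. The map $\pi^*:\NS(B)\hookrightarrow \NS(X)$ is injective by the same argument used in the previous subsection for Picard groups, giving $\rho(B)$ independent classes. The class $[\Sigma]$ is independent of $\pi^*\NS(B)$ because pairing with a generic fiber $X_P$ yields $[\Sigma]\cdot [X_P]=1$, while $\pi^*(D)\cdot [X_P]=0$ for every $D\in\NS(B)$. To count the fibral part, let $\Delta_1,\ldots,\Delta_s\subset B$ be the divisors over whose generic points the fibers become reducible, and let $F_{j,0},F_{j,1},\ldots,F_{j,m_j}$ denote the prime components of the generic fiber over $\Delta_j$, with $F_{j,0}$ the unique component meeting $\Sigma$ (since $\Sigma$ is a section). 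The relation $\pi^*(\Delta_j)=\sum_i a_{j,i}[F_{j,i}]$ in $\NS(X)$ eliminates each $[F_{j,0}]$ modulo $\pi^*\NS(B)$, leaving exactly $\sum_j m_j=\Gamma$ independent classes represented by the $F_i$ not meeting $\Sigma$; their independence from $[\Sigma]$ follows from the same fiber-intersection check, as the $F_i$ are vertical.

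The second assertion is the core of the argument and is where I expect the main technical obstacle to lie. Let $X_\eta$ be the generic fiber of $\pi$, an elliptic curve over the function field $K=k(B)$ whose origin is the restriction of $\Sigma$. The canonical splitting $\Pic(X_\eta)\cong \Z\oplus \Pic^0(X_\eta)\cong \Z\oplus X_\eta(K)=\Z\oplus \MW(\pi)$, composed with restriction of divisor classes, yields a homomorphism $\phi: \NS(X)\to \MW(\pi)$. Surjectivity is immediate because every $K$-point of $X_\eta$ spreads out to a rational section of $\pi$, hence to a horizontal divisor on $X$ whose generic-fiber restriction realizes the prescribed point. The crux is to identify $\ker \phi$ with $T$ (up to torsion): one direction is clear, since pullbacks and vertical classes restrict to zero on $X_\eta$ while $[\Sigma]$ restricts to the origin. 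The hard direction requires showing that any divisor whose restriction to $X_\eta$ is supported at the origin is equivalent, modulo vertical divisors, to an integer multiple of $[\Sigma]$. This hinges on the structural fact that a divisor on $X$ restricting to zero on $X_\eta$ is necessarily supported over a divisor of $B$, and hence is a sum of the $F_{j,i}$. Combining the two rank computations yields $\rho(X)=\rho(B)+1+\Gamma+\rk \MW(\pi)$.
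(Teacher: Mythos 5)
The paper offers no proof of this theorem: it is quoted verbatim from Wazir (the cited Corollary 3.2), so there is no in-paper argument to compare against. Your proposal reconstructs the standard Shioda--Tate strategy in exactly the form Wazir generalizes it -- split $\NS(X)$ into the ``trivial lattice'' $T$ spanned by $\pi^*\NS(B)$, $[\Sigma]$ and the non-identity fibral components, then identify $\NS(X)/T$ with $\MW(\pi)$ up to torsion via restriction to the generic fibre -- so the architecture is the right one, and the surjectivity and one inclusion of the kernel computation are handled correctly. Note also that your reduction of a divisor trivial on $X_\eta$ to a vertical divisor implicitly uses flatness (equidimensionality of the fibres), so that $\pi^{-1}$ of a codimension $\ge 2$ locus contains no divisor of $X$; this is covered by the paper's definition of elliptic fibration but deserves to be said.

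The one genuine gap is the claim that the classes $\pi^*\NS(B)$, $[\Sigma]$, $[F_{j,1}],\dots,[F_{j,m_j}]$ are linearly independent, i.e.\ that $\rk T$ is exactly $\rho(B)+1+\Gamma$ rather than something smaller. The only check you supply -- intersecting with a generic fibre $X_P$ -- separates $[\Sigma]$ from the vertical classes but says nothing about possible relations among the fibral components themselves or between them and $\pi^*\NS(B)$, since all of these pair to zero with $X_P$. In the surface case this is precisely where Zariski's lemma enters (the intersection form on the components of a fibre is negative semi-definite with radical spanned by the whole fibre, so deleting the identity component leaves a negative definite, hence nondegenerate, lattice); in higher dimensions one needs the analogous statement, e.g.\ obtained by restricting to a generic surface fibered over a curve cut out by hyperplanes in $B$ and invoking the surface case there. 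Without some version of this input the first rank count is an assertion, not a proof.
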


\begin{prop}\label{cor: hdg numbers of base}
If $\pi: X \rightarrow B$ is an elliptic Calabi--Yau, then $X$ is not birational over $B$ to a product fibration.
\end{prop}
\begin{proof}
Assume that there exists a birational map $f: X \dashrightarrow B \times E$ such that the following diagram commutes
\[\xymatrix{X \ar@{-->}[rr]^f \ar[dr]_\pi & & B \times E \ar[dl]^{p}\\
 & B & },
 \]
where $p$ is the projection on the first factor and $E$ is an elliptic curve. The geometric genus $h^0(B \times E, \omega_{B \times E})$ of $B \times E$ is then zero, since
\[
h^0(B \times E, \omega_{B \times E}) = h^{n, 0}(B \times E) = h^{n - 1, 0}(B) \cdot h^{1, 0}(E) = h^{n - 1, 0}(B) = 0,
\]
where $n$ denotes the dimension of $X$, and the last equality follows from Corollary~\ref{CSX971}. On the other hand, the geometric genus is a birational invariant, thus $h^0(B \times E, \omega_{B \times E}) = h^0(X, \omega_X) = 1$, an obvious contradiction. As such, $X$ cannot be birational to a product fibration.
\end{proof}

For elliptic Calabi--Yaus, the Shioda--Tate--Wazir formula may be reformulated as follows. 

\begin{prop}[Shioda--Tate--Wazir formula for elliptic Calabi--Yaus]
\label{prop: STW for CY}
Let $\pi:X\rightarrow B$ be an elliptic Calabi--Yau with $X$ of dimension $n >2$. Then
\begin{equation}\label{STWCY}
h^{1, 1}(X) = h^{1, 1}(B) + 1 + \Gamma + \rk \MW(\pi),
\end{equation}
where $\Gamma$ is the number of irreducible and reduced fibral divisors of $\pi$ not intersecting the defining section of the fibration.
\end{prop}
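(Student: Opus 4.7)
The plan is to reduce the statement to the Shioda--Tate--Wazir formula (Theorem~\ref{STW}) by showing that, under the Calabi--Yau hypothesis with $n > 2$, the Picard numbers $\rho(X)$ and $\rho(B)$ coincide with the Hodge numbers $h^{1,1}(X)$ and $h^{1,1}(B)$ respectively. The Shioda--Tate--Wazir formula already supplies
\[
\rho(X) = \rho(B) + 1 + \Gamma + \rk\MW(\pi),
\]
so it suffices to upgrade the Picard numbers on both sides to Hodge numbers.

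First I would verify the equality $\rho(X) = h^{1,1}(X)$. Because $X$ is a Calabi--Yau $n$-fold with $n > 2$, the definition of Calabi--Yau forces $h^{0,2}(X) = 0$, hence by Serre duality and the Hodge decomposition $H^2(X,\mathbb{C}) = H^{1,1}(X)$, so that $H^2(X,\mathbb{Q}) \subset H^{1,1}(X,\mathbb{Q})$. By the Lefschetz $(1,1)$-theorem applied to the compact K\"ahler manifold $X$, the N\'eron--Severi group $\NS(X)\otimes \mathbb{Q}$ coincides with $H^{1,1}(X,\mathbb{Q})$, which in this case equals all of $H^2(X,\mathbb{Q})$. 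Consequently $\rho(X) = b_2(X) = h^{1,1}(X)$.

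Next I would establish $\rho(B) = h^{1,1}(B)$ by the same recipe, but now using Corollary~\ref{cor: hdg numbers of base}. Since $\pi: X \to B$ is a fibration and $X$ is Calabi--Yau, that corollary gives $h^{0,k}(B) = 0$ for $k = 1, \ldots, \dim B$; in particular, because $\dim B = n - 1 \geq 2$, we have $h^{0,2}(B) = 0$. As $B$ is a smooth compact K\"ahler manifold (being the base of a K\"ahler elliptic fibration), the Lefschetz $(1,1)$-theorem again gives $\NS(B)\otimes\mathbb{Q} = H^{1,1}(B,\mathbb{Q}) = H^2(B,\mathbb{Q})$, whence $\rho(B) = h^{1,1}(B)$.

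Substituting these two equalities into the Shioda--Tate--Wazir formula yields \eqref{STWCY}. There is no genuine obstacle here: the only point requiring any care is the need for $\dim B \geq 2$ in order to apply Corollary~\ref{cor: hdg numbers of base} to get $h^{0,2}(B) = 0$, which is precisely what the hypothesis $n > 2$ guarantees. The surface case $n = 2$ is necessarily excluded because then $B$ is a curve, for which the N\'eron--Severi rank trivially equals $h^{1,1}(B) = 1$ but the Calabi--Yau vanishing $h^{0,2}(X) = 0$ fails, so the argument for $X$ itself would need to be reconsidered.
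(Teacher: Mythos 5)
Your proposal is correct and follows essentially the same route as the paper: both arguments reduce the statement to Theorem~\ref{STW} by showing $\rho(X)=h^{1,1}(X)$ and $\rho(B)=h^{1,1}(B)$, using the vanishing of $h^{0,2}$ on $X$ (from the Calabi--Yau condition with $n>2$) and on $B$ (from Corollary~\ref{cor: hdg numbers of base}). The only cosmetic difference is that you invoke the Lefschetz $(1,1)$-theorem where the paper runs the long exact sequence of the exponential sequence directly, which is the same underlying mechanism.
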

\begin{proof}
It follows from the Calabi--Yau assumption on $X$ together with Corollary \ref{CSX971} that
\[h^{0, 1}(X) = h^{0, 2}(X) = 0 \qquad \text{and} \qquad h^{0, 1}(B) = h^{0, 2}(B) = 0.\]
From the long exact sequence of the exponential sequence of $X$ and $B$ we then have
\[\Pic(X) = \NS(X) = H^2(X, \IZ) \qquad \text{and} \qquad \Pic(B) = \NS(B) = H^2(B, \IZ),\]
from which it follows that $\rho(X) = \rk H^2(X, \IZ) = h^{1, 1}(X)$ and similarly for $B$. The result then follows from the Shioda--Tate--Wazir formula, as $\pi:X\to B$ is not birational to a product fibration by Proposition~\ref{cor: hdg numbers of base}.
\end{proof}

\begin{rem}
Formula \eqref{STWCY} actually holds more generally, namely, under the assumptions $h^{1, 0}(X) = h^{2, 0}(X) = 0$ and that $X$ is not birational to a product fibration.
\end{rem}

\begin{rem}
The result in Proposition \ref{prop: STW for CY} is false for $n = 2$, even if the Calabi--Yau condition still holds. In fact it is well known that a K3 surface $X$ has $h^{1, 1}(X) = 20$, while $0 \leq \rho(X) \leq 20$ and all cases can occur (for elliptically fibered K3 surfaces, all the cases with $2 \leq \rho(X) \leq 20$ occur). The main difference between the case of surfaces and the higher-dimensional fibrations is that K3 surfaces are the only Calabi--Yau varieties with $h^{2, 0} \neq 0$.
\end{rem}

\begin{prop}\label{PNX1971}
Let $\pi:X\to B$ be an elliptic Calabi--Yau, let $p:\widetilde{\mathbb{P}(\mathscr{E})}\to \mathbb{P}(\mathscr{E})$ be a birational map which is obtained by a sequence of $n$ blowups along smooth centers, and suppose the proper transform $\widetilde{W}_X$ of $W_X$ with respect to the map $p$ is an elliptic Calabi--Yau. Then $h^{1,1}(\widetilde{W}_X)=h^{1,1}(\widetilde{\mathbb{P}(\mathscr{E})})$ if and only if
\begin{equation}\label{H11QX}
n=\Gamma+\emph{rank}\left(\emph{MW}(\pi)\right),
\end{equation}
where $\Gamma$ is the number of irreducible and reduced fibral divisors of $\pi$ not intersecting the defining section of the fibration $\widetilde{W}_X\to B$. In particular, if $X$ is a 3-fold and equation \eqref{H11QX} holds, then $X$ satisfies LTP, and $p:\widetilde{\mathbb{P}(\mathscr{E})}\to \mathbb{P}(\mathscr{E})$ is the associated LTP map.
\end{prop}

\begin{proof}
By equation \eqref{h11pb} we have
\[
h^{1,1}(\widetilde{\mathbb{P}(\cE)})=h^{1,1}(B)+1+n,
\]
while the Shioda--Tate--Wazir formula \eqref{STWCY} yields
\[
h^{1,1}(\widetilde{W}_X)=h^{1,1}(B)+1+\Gamma+\text{rank}(\text{MW}(\pi)),
\]
thus $h^{1,1}(\widetilde{W}_X)=h^{1,1}(\widetilde{\mathbb{P}(\mathscr{E})})$ if and only if $n=\Gamma+\text{rank}\left(\text{MW}(\pi)\right)$, as desired.
\end{proof}

\subsection{The Lefschetz property after de Cataldo and Migliorini}\label{sect: non-lef}

In \cite{DeCataldoMigliorini}, de Cataldo and Migliorini introduced the concept of \emph{lef divisors}. In particular, they proved that if $D$ is a lef divisor in a smooth complex projective variety $M$, then for $D$ and $M$ the conclusions of the Lefschetz Hyperplane Theorem hold (see \cite[Proposition 2.1.5]{DeCataldoMigliorini}). In this section, we recall the definition of lefness, and then show that in the case we are examining --- namely, where $D$ is a crepant resolution of an anti-canonical Weierstrass fibration and $M$ is the associated blown up projective bundle --- this property does not hold, at least when the base of the fibration is Fano.

\begin{definition}[{Lef divisor, cf.~\cite[Definition 2.1.3]{DeCataldoMigliorini}}]
We say that a divisor $D$ in a variety $M$ is \emph{lef} if and only if a positive multiple of $D$ is generated by its global sections, and the corresponding morphism onto the image is semi-small, i.e., the map $\varphi = \varphi_{|D|}: M \rightarrow M'$ has the property that there is no irreducible subvariety $T \subseteq M$ such that
\[2 \dim T - \dim \varphi(T) > \dim M.\]
\end{definition}

\begin{prop}\label{prop: -kz not lef}
Let $B$ be a smooth Fano variety, and let $Z = \IP(\cO_B\oplus \omega_B^{-2} \oplus \omega_B^{-3})$. Then $\omega_Z^{-n}$ is not base-point-free for $n \geq 1$. In particular, $\omega_Z^{-1}$ is not lef.
\end{prop}
\begin{proof}
Let $p: Z \rightarrow B$ denote the structure map. By equation \eqref{FCCPB}, $\omega_Z^{-1} = p^* \omega_B^{-6} \otimes \cO_Z(3)$, hence
\[\omega_Z^{-n} = p^* \omega_B^{-6n} \otimes \cO_B(3n).\]
Let $(x: y: z)$ denote the natural coordinates on the fibers of $Z$. Then $y$ is a section of $p^* \omega_B^{-3} \otimes \cO_Z(1)$ and hence the coefficient of $y^{3n}$ in an equation for a divisor $D \in |\omega_Z^{-n}|$ must be a section of $p^* \omega_B^{3n}$. However,
\[H^0(Z, p^* \omega_B^{3n}) = H^0(B, \omega_B^{3n}) = 0\]
as $\omega_B^{3n}$ is anti-ample (recall that $B$ is Fano). As such, if $s$ denotes a global section of $\omega_Z^{-n}$, then the monomial $y^{3n}$ does not appear in an expression for $s$, and so the point $(0: 1: 0) \in Z_P$ annihilates $s$ for every $P \in B$. This shows that the codimension $2$ subvariety of $Z$ defined by $x = z = 0$? is contained in the base locus of $|\omega_Z^{-n}|$ for every positive $n$. As a consequence, $\omega_B^{-n}$ is not globally generated, hence $\omega_Z^{-1}$ can not be lef.
\end{proof}

\begin{prop}
Let $B$ be a Fano variety, suppose $X\to B$ is an elliptic Calabi--Yau which satisfies LTP, and let $\widetilde{W}_X\to W$ be the associated crepant resolution of the Weierstrass model of $X$ and suppose the LTP map is obtained by a sequence of blowups along smooth centres. Then $\mathscr{O}(\widetilde{W}_X)$ is not lef.
\end{prop}

\begin{proof}
Let  $\mathscr{E}=\cO_B\oplus \omega_B^{-2} \oplus \omega_B^{-3}$, and suppose the LTP map $\widetilde{\mathbb{P}(\mathscr{E})}\overset{\text{LTP}}\longrightarrow \mathbb{P}(\mathscr{E})$ which yields the crpeant resolution $\widetilde{W}_X\to W$ is obtained by a sequence of blowups of $\mathbb{P}(\mathscr{E})$ along smooth centers. To prove the proposition, we will show that the proper transform of the natural section $x = z = 0$ of $W_X\to B$ is in the base locus of the linear system $|n\widetilde{W}_X|$ for every $n \geq 1$. This will be achieved by induction on the number $k$ of blowups composing $\widetilde{\mathbb{P}(\mathscr{E})}\overset{\text{LTP}}\longrightarrow \mathbb{P}(\mathscr{E})$, the case $k = 0$ being Proposition \ref{prop: -kz not lef}.

For the inductive step, let $Z=\IP(\cE)$, and denote
\[\widetilde{Z} \xrightarrow{b} \hat{Z} \rightarrow Z,
\]
where $b$ is the $k$-th and last blow up, and let $\hat{W}$ be the proper transform of $W$ in $\hat{Z}$. Assume that $b$ is the blow up the smooth subvariety $C$ of $\hat{Z}$, defined by the ideal $\cI_C$, and denote by $E$ the exceptional divisor. Since $\widetilde{W}_X \rightarrow W_X$ is crepant, we have that $b(E)$ must be contained in the singular locus of $\hat{W}$. We then have
\[b^* \cO_{\hat{Z}}(n \hat{W}) = \cO_{\widetilde{Z}}(n \widetilde{W}_X) \otimes \cO_{\widetilde{Z}}(tE),\]
for some $t \in \IZ$. As a consequence
\[H^0(\tilde{Z}, \cO_{\widetilde{Z}}(n \widetilde{W}_X)) = H^0(\hat{Z}, \cO_{\hat{Z}}(n \hat{W}) \otimes \cI_C^t),\]
i.e., sections of $\cO_{\widetilde{Z}}(n \widetilde{W})$ can be identified with sections of $\cO_{\hat{Z}}(n \hat{W})$ which vanish along $C$ at least of order $t$. By the inductive hypothesis the proper transform of the natural section of $W$ is a component of the base locus of $\cO_{\hat{Z}}(n \hat{W})$, which is disjoint from the center $C$ of the blow up. As such, the proper transform of the section $x = z = 0$ of $W_X\to B$ is still in the base locus of $\cO_{\widetilde{Z}}(n \widetilde{W}_X)$, thus $\widetilde{W}_X$ cannot be lef.
\end{proof}

\section{LTP for crepant resolutions of Weierstrass 3-folds}\label{LTP3F}

Let $B$ be an arbitrary rational surface. We now consider 14 families of elliptic 3-folds $X\to B$ with simple gauge groups and verify that they satisfy LTP. Every elliptic Calabi--Yau $X\to B$ we consider in this section is of the form $X=\widetilde{W}$, where $\widetilde{W}\to W$ is a crepant resolution of a singular Weierstrass fibration $W\to B$. In each case, it turns out that $W\to B$ is in fact the Weierstass model of $\widetilde{W}\to B$, so that $W=W_{\widetilde{W}}$. 

\subsection{The Weierstrass fibrations under consideration} 
Given an anti-canonical Weierstrass fibration 
\[
W:(y^2z=x^3+Fxz^2+Gz^3)\subset \mathbb{P}(\cO_B \oplus \omega_B^{-2} \oplus \omega_B^{-3}),
\]
one may make a linear change of coordinates to put the fibration in \emph{Tate form}, which is given by
\begin{equation}\label{tf}
W:(y^2z + a_1xyz + a_3yz^2 = x^3 + a_2x^2z + a_4xz^2 + a_6z^3)\subset \mathbb{P}(\cO_B \oplus \omega_B^{-2} \oplus \omega_B^{-3}).
\end{equation}
In the Tate form each $a_i$ is a regular section of $\omega_B^{-i}$, and they are related to $F$ and $G$ by the equations
\[
F=\frac{-1}{48}(b_2^2-24b_4), \quad 
G=\frac{-1}{864}(36b_2b_4 - b_2^3 - 216b_6),
\]
where
\[
b_2 = a_1^2 + 4a_2, \quad
b_4 = a_1a_3 + 2a_4, \quad \text{and} \quad 
b_6 = a_3^2 + 4a_6.
\]
One may then employ Tate's algorithm \cite{Tate}\cite{KMSN1} to prescribe that the coefficients $a_i$ vanish to certain orders along a divisor $S\subset B$ in such a way that a particular singular fiber $\mathfrak{f}_W$ will appear over $S$ upon a resolution of singularities $\widetilde{W}\to W$. The dual graph of $\mathfrak{f}_W$ is then an affine Dynkin diagram associated with a Lie algebra $\mathfrak{g}$. The gauge group $\mathcal{G}_W$ associated with $W$ is then given by
\[
\mathcal{G}_W=\frac{\text{exp}(\mathfrak{g}^{\vee})}{\text{MW}_{\text{tor}}(\psi)}\times U(1)^{\text{rkMW}(\psi)},
\]  
where $\psi:W\to B$ is the associated projection to $B$, and $\text{MW}(\psi)$ denotes the Mordell--Weil group of rational sections of $\psi$. We note that it is possible for two distinct Weierstrass fibrations $W\to B$ and $W'\to B$ with distinct $\mathfrak{f}_W$ and $\mathfrak{f}_{W'}$ to give rise to the same gauge group, so that it is not necessarily the case that $\mathcal{G}_W\neq \mathcal{G}_{W'}$. 

Now let $S\subset B$ be a smooth divisor in the rational surface $B$. The equations of the Weierstrass fibrations we consider are all in Tate form as given by \eqref{tf}. We consider 14 distinct families of singular Weierstrass fibrations, whose explicit equations are given in Table~\ref{t1} along with the associated gauge groups. In each case, $x$ is a section of $\cO(1)\otimes \omega_B^{-2}$, $y$ is a section of $\cO(1)\otimes \omega_B^{-3}$, $z$ is a section of $\cO(1)$, $a_i$ is a section of $\omega_B^{-i}$ and $a_{i,k}$ is a section of $\omega_B^{-i}\otimes \cO(-kS)$, so that $W$ corresponds to the zero-locus of a section of $\cO(3)\otimes \omega_B^{-6}$. The total space of each fibration $W\to B$ is an anti-canonical divisor in $\mathbb{P}(\cO_B \oplus \omega_B^{-2} \oplus \omega_B^{-3})$ and singular along $x = y = s = 0$, where $s$ is the section of $\cO(S)$ whose zero-locus is the smooth divisor $S$. We take the map $b\mapsto (0:1:0)$ to be the distinguished section of each fibration.

\begin{table}[hbt]
\begin{center}
\begin{tabular}{|c|c|c|c|}
\hline
$W$ & $\mathcal{G}_W$ \\
\hline
$y^2z=x^3 + a_{4,1}sxz^2 + a_{6,2}s^2z^3$ & SU(2) \\
\hline
$y^2z + a_{3,1}syz^2=x^3 + a_{4,2}s^2xz^2 + a_{6,3}s^3z^3$ & SU(3) \\
\hline
$y^2z + a_1xyz=x^3 + a_{2,1}sx^2z + a_{4,2}s^2xz^2 + a_{6,4}s^{4}z^3$ & SU(4) \\
\hline
$y^2z + a_1xyz + a_{3,2}s^2yz^2=x^3 + a_{2,1}sx^2z + a_{4,3}s^{3}xz^2 + a_{6,5}s^{5}z^3$ & SU(5) \\
\hline
$y^2z=x^3 + a_2x^2z + a_{4,3}s^{3}xz^2 + a_{6,5}s^{5}z^3$ & USp(4) \\
\hline
 $y^2z=x^3 + a_2x^2z + sxz^2$ & SO(3) \\
\hline
$y^2z=x^3 + a_2x^2z + s^2xz^2$ & SO(5) \\
\hline
 $y^2z + a_1xyz=x^3 + sx^2z + s^2xz^2$ & SO(6) \\
\hline
 $y^2z=x^3 + a_{2,1}sx^2z + a_{4,2}s^2xz^2 + a_{6,4}s^4z^3$ & Spin(7) \\
\hline
$y^2z=x^3 + a_{4,2}s^2xz^2 + a_{6,3}s^3z^3$ & $G_2$ \\
\hline
$y^2z=x^3 + a_{4,3}s^3xz^2 + a_{6,4}s^4z^3$ & $F_4$ \\
\hline
$y^2z + a_{3,2}s^2yz^2=x^3 + a_{4,3}s^3xz^2 + a_{6,5}s^5z^3$ & $E_6$ \\
\hline
$y^2z=x^3 + a_{4,3}s^3xz^2 + a_{6,5}s^5z^3$ & $E_7$ \\
\hline
$y^2z=x^3 + a_{4,4}s^4xz^2 + a_{6,5}s^5z^3$ & $E_8$ \\
\hline
\end{tabular}
\caption{Equations for the anti-canonical Weierstrass fibrations under consideration.}
\label{t1}
\end{center}
\end{table} 

For each anti-canonical Weierstrass fibration $W$ listed in Table~\ref{t1}, a crepant resolution $\widetilde{W}\to W$ was constructed in \cite{Esole} by blowing up the projective bundle $\mathbb{P}(\cO_B \oplus \omega_B^{-2} \oplus \omega_B^{-3})$ along smooth complete intersections and then taking the proper transform $\widetilde{W}$ of $W$ along the blowups. In each case, the initial blowup is along the singular locus $x=y=s=0$, thus all divisors introduced in the resolution process do not meet the distinguished section. It then follows that in each example we have $W=W_X$, where $X=\widetilde{W}$ is the crepant resolution of $W$. For example in the SU(3), $G_2$ and USp(4) cases, the crepant resolution is obtained by two blowups. The first blowup $Z_1\to Z_0=\mathbb{P}(\cO_B \oplus \omega_B^{-2} \oplus \omega_B^{-3})$ is along its singular locus $\{x=y=s=0\}\subset Z_0$ with exceptional divisor $E_1$, and the second blow up is along $\{y = e_1 = 0\}\subset Z_1$, where $e_1 = 0$ is an equation for $E_1$ and $y$ denotes the pullback of the section $y$ under $Z_1\to Z_0$. We then summarize the resolution procedure  with the notation (as introduced in \cite{Esole})
\[
((x,y,s),(y,e_1)),
\]
where the first entry $(x,y,s)$ denotes the ideal along which the first blow up takes place, and the the second entry $(y,e_1)$ denotes the ideal along which the second blow up takes place. Such notation will then be used to summarize each resolution we consider, and all such resolution procedures are listed in Table~\ref{T2}. We note that if $Z_i\to Z_{i-1}$ denotes the $i$-th blow up with exceptional divisor $E_i$, then $e_i$ denotes a section of $\mathscr{O}(E_i)$ (we also elide the difference in notation between a section of a line bundle on $Z_{i-1}$ and its pullback via $Z_i\to Z_{i-1}$). 

\begin{table}[hbt]
\begin{center}
\begin{tabular}{|c|c|c|c|}
\hline
Resolution & $\mathcal{G}_Y$ \\
\hline
$(x,y,s)$ & SU(2) \\
\hline
$((x,y,s),(y,e_1))$ & SU(3) \\
\hline
$((x,y,s),(y,e_1),(x,e_2))$ & SU(4) \\
\hline
$((x,y,s),(x,y,e_1),(y,e_1),(y,e_2))$ & SU(5) \\
\hline
$((x,y,s),(y,e_1))$ & USp(4) \\
\hline
 $(x,y,s)$ & SO(3) \\
\hline
$((x,y,s),(x,y,e_1))$ & SO(5) \\
\hline
 $((x,y,s),(y,e_1),(x,e_2))$ & SO(6) \\
\hline
$((x,y,s),(y,e_1),(x,e_2))$ & Spin(7) \\
\hline
$((x,y,s),(y,e_1))$ & $G_2$ \\
\hline
$((x,y,s),(y,e_1),(x,e_2),(e_2,e_3))$ & $F_4$ \\
\hline
$((x,y,s),(y,e_1),(x,e_2),(e_2,e_3),(y,e_3),(y,e_4))$ & $E_6$ \\
\hline
$((x,y,s),(y,e_1),(x,e_2),(y,e_3),(e_2,e_3),(e_2,e_4),(e_4,e_5))$ & $E_7$ \\
\hline
$((x,y,s),(y,e_1),(x,e_2),(y,e_3),(e_2,e_3),(e_4,e_5),(e_2,e_4,e_6),(e_4,e_7)) $ & $E_8$ \\
\hline
\end{tabular}
\caption{Resolution procedure for each Weierstrass model.}
\label{T2}
\end{center}
\end{table}

\subsection{Verification of LTP}\label{sect: SO(5)}

\begin{lemma}\label{LMW0}
Let $\widetilde{W}\to W$ be any of the crepant resolutions as given in Table~\ref{T2}, and let $\pi:\widetilde{W}\to B$ be the structure map. Then $\emph{rank}\left(\emph{MW}(\pi)\right)=0$, and $W\to B$ is the Weierstrass model of $\pi:\widetilde{W}\to B$.
\end{lemma}

\begin{proof}
The fact that $\text{rank}\left(\text{MW}(\pi)\right)=0$ is standard (see, e.g., \cite{KLOOS}\cite{Esole}). And since each blowup in every resolution procedure given in Table~\ref{T2} introduces a fibral divisor which does not meet the distinguished section of $\widetilde{W}\to B$ (which we recall is given by $b\mapsto (0:1:0)$ for all $b\in B$), it follows that the crepant resolution $\pi:\widetilde{W}\to W$ contracts all divisors not meeting the section of $\widetilde{W}\to B$, thus $W\to B$ is the Weierstrass model of $\widetilde{W}\to B$, as desired.
\end{proof}

\begin{theorem}\label{LTP3F917}
Let $\widetilde{W}\to W$ be any of the crepant resolutions as given in Table~\ref{T2}, and let $\pi:\widetilde{W}\to B$ be the structure map. Then $\pi:\widetilde{W}\to B$ satisfies LTP. 
\end{theorem}
\begin{proof}
Since $\widetilde{W}$ is a 3-fold and $W\to B$ is the Weierstrass model of $\pi:\widetilde{W}\to B$ by Lemma~\ref{LMW0}, it follows from Proposition~\ref{PNX1971} that $\pi:\widetilde{W}\to B$ satisfies LTP if 
\[
n=\Gamma+\text{rank}\left(\text{MW}(\pi)\right),
\]
where $n$ is the number of blowups in the resolution procedure which yields $\widetilde{W}\to W$ as given in Table~\ref{T2}, and $\Gamma$ is the number of irreducible and reduced fibral divisors of $\widetilde{W}\to B$ not meeting the distinguished section. And since $W\to B$ does not contain any fibral divisors not meeting its distinguished section, and $\text{rank}\left(\text{MW}(\pi)\right)=0$ by Lemma~\ref{LMW0}, $\widetilde{W}\to B$ satisfies LTP if each blowup in the resolution procedure $\widetilde{W}\to W$ introduces a fibral divisor not meeting the distinguished section of $\pi:\widetilde{W}\to B$, which we now show.

Indeed, in each resolution procedure given in Table~\ref{T2}, the first blowup is along $(x,y,s)$, which does not meet the distinguished section, which we recall is given by $b\mapsto (0:1:0)$ for all $b\in B$. As such, the exceptional divisor $E_1$ of the first blowup is a fibral divisor which does not meet the distinguished section. The exceptional divisors of the subsequent blowups are also fibral divisors which do not meet the distinguished section, as they all correspond to blowups along some subvariety of the previous exceptional divisor, as can be seen from Table ~\ref{T2}. It then follows that in each resolution procedure given in Table~\ref{T2}, every blowup introduces a fibral divisor not meeting the distinguished section of $\pi:\widetilde{W}\to B$, thus $\widetilde{W}$ satisfies LTP.   
\end{proof}

\section{LTP for the \texorpdfstring{$E_6$}{E6}, \texorpdfstring{$E_7$}{E7} and \texorpdfstring{$E_8$}{E8} families}\label{LTP4F}

In this section, we consider 3 families of elliptic  Calabi--Yaus which are commonly referred to as the $E_8$, $E_7$ and $E_6$ families\footnote{We note that the $E_8$, $E_7$ and $E_6$ families defined here do \emph{not} coincide with the singular anti-canonical Weierstrass fibrations introduced in \S\ref{LTP3F} whose associated gauge groups are $E_8$, $E_7$ or $E_6$ as listed in Table~\ref{t1}.} (see, e.g., \cite{AE2} \cite{CCG}\cite{FvHH}\cite{F}\cite{KLRY}). We then show that when the base of such fibrations is of dimension 2 or 3, the associated elliptic Calabi--Yau 3- and 4-folds satisfy LTP (in the 4-fold case we require that the base is a toric Fano 3-fold), i.e., we construct LTP maps $\widetilde{\mathbb{P}(\mathscr{E})}\overset{\text{LTP}}\longrightarrow \mathbb{P}(\mathscr{E})$ and show that the upper Hodge diamonds of such resolutions coincide the with upper Hodge diamonds of the (blown up) projective bundles in which they are naturally embedded. We note that while the $E_6$, $E_7$ and $E_8$ families are often defined in such a way that they are not necesarily Calabi--Yau, for our purposes we will restrict to the Calabi--Yau case.

\subsection{The fibrations under consideration}\label{FUC71}

Let $B$ be a smooth compact complex projective variety. For $\text{dim}(B)=2$ we assume $B$ is a rational surface and for $\text{dim}(B)>2$ we assume $B$ is a Fano variety.

\begin{definition}[$E_8$ fibrations]
\label{DE8}
Let $\cE=\omega_B^{-2}\oplus \omega_B^{-3}\oplus \cO_B$.  An elliptic fibration $W\to B$ is said be an \emph{$E_8$ fibration} (or a \emph{smooth Weierstrass fibration}), if and only if $W$ a hypersurface in $\mathbb{P}(\cE)$ given by the equation
\begin{equation}\label{E8}
W: (y^2z=x^3+fxz^2+gz^3)\subset \mathbb{P}(\cE).
\end{equation}
The distinguished section is the constant section $s(b) = (0: 1: 0) \in W_b$.
\end{definition}
In the above equation, $x$, $y$ and $z$ denote regular sections of $\cO(1)\otimes \omega_B^{-2}$, $\cO(1)\otimes \omega_B^{-3}$ and $\cO(1)$ respectively, while $f$ and $g$ are sections of $\omega_B^{-4}$ and $\omega_B^{-6}$ respectively. To ensure that $W$ is smooth, we make the assumption that the hypersurfaces in $B$ given by $f=0$ and $g=0$ are both smooth and intersect transversally, or in other words, that $\{f=0\}\cup \{g=0\}$ is a normal crossing divisor with smooth irreducible components.

Fibrations of type $E_7$ were originally defined as a hypersurface in a weighted projective bundle $\mathbb{P}_{1,1,2}(\mathscr{E})$ over $B$. Here we recall the equivalent definition given in \cite[$\S$1]{CCG} as a hypersurface in a non-weighted $\IP^2$-bundle over $B$.

\begin{definition}[$E_7$ fibrations]
\label{DE7}
Let $\mathscr{E}=\cO_B\oplus \omega_B^{-1}\oplus \omega_B^{-2}$. An elliptic fibration $X\to B$ is said to be an \emph{$E_7$ fibration} if and only if $X$ is a hypersurface in $\mathbb{P}(\cE)$ given by
\begin{equation}\label{E7}
X:(y^2z-2x^2y+c_2x^2z+c_3xz^2+c_{4}z^3=0)\subset \mathbb{P}(\cE).
\end{equation}
The distinguished section is the constant section $s(b) = (0: 1: 0) \in X_b$.
\end{definition}

In the above equation, $x$, $y$ and $z$ are regular sections of $\cO(1)\otimes \omega_B^{-1}$, $\cO(1)\otimes \omega_B^{-2}$ and $\cO(1)$ respectively, and $c_i$ is a regular section of $\omega_B^{-i}$. To ensure $X$ is smooth we assume $\bigcup_{i=2}^{4}\{c_i=0\}$ is a normal crossing divisor with smooth irreducible components.

\begin{definition}[$E_6$ fibrations]
\label{DE6}
Let $\cE=\cO_B\oplus \omega_B^{-1}\oplus \omega_B^{-1}$. an elliptic fibration $Y\to B$ is said to be an \emph{$E_6$ fibration} if and only if $Y$ is a hypersurface in $\mathbb{P}(\cE)$ given by
\begin{equation}\label{E6}
Y:(x^3+y^3=b_1xyz+b_2xz^2+e_2yz^2+b_3z^3)\subset \mathbb{P}(\cE).
\end{equation}
The distinguished section is the constant section $s(b) = (0: 1: 0) \in Y_b$.
\end{definition}
In the above equation, $x$, $y$ and $z$ are regular sections of $\cO(1)\otimes \omega_B^{-1}$, $\cO(1)\otimes \omega_B^{-1}$ and $\cO(1)$ respectively, $b_i$ is a regular section of $\omega_B^{-i}$ and $e_2$ is a section of $\omega_B^{-2}$. To ensure $Y$ is smooth we assume $\bigcup_{i=1}^3\{b_i=0\}\cup \{e_2=0\}$ is a normal crossing divisor with smooth irreducible components.

\begin{prop}
$E_8$, $E_7$ and $E_6$ fibrations are all elliptic Calabi--Yaus.
\end{prop}

\begin{proof}
The proposition follows immediately from the adjunction formula together with a straightforward calculation of the arithmetic genus. For details, see e.g. \cite{AE2} or \cite{KLRY}.
\end{proof}

\subsection{LTP in the \texorpdfstring{$E_8$}{E8} case}\label{LTPE8}

Let $W\to B$ be an $E_8$ fibration. Recall that fibrations of type $E_8$ are in fact smooth Weierstrass fibrations, all of which admit the section $\sigma_0: B \rightarrow W$ given by $\sigma_0(b) = (0:1:0) \in W_b$, which we take to be the distinguished section of the fibration. Note that since $W\to B$ is its own Weierstrass model, an LTP map for an $E_8$ fibration is simply the identity.

\begin{theorem}\label{prop: LTPE8}
Let $W\to B$ be a generic 3- or 4-fold $E_8$ fibration as given by \eqref{E8}. In the 4-fold case, assume further that $B$ is a Fano toric 3-fold. Then $W$ satisfies LTP.
\end{theorem}
\begin{proof}
It follows from Remark~\ref{RM1X} that the relevant Hodge numbers for verifying LTP are $h^{1, 1}$ and, in the $4$-fold case, also $h^{1, 2}$. Since the Mordell--Weil rank of a generic smooth Weierstrass fibration is $0$ and the singular fibers of $W\to B$ are irreducible, for $\dim(B) > 1$ it follows from Proposition~\ref{prop: STW for CY} that
\[
h^{1,1}(W)=h^{1,1}(B)+1.
\]
Moreover, from Lemma \ref{lemma: hdg bundle}\eqref{pbf} we have
\[
h^{1,1}(\mathbb{P}(\cE))=h^{1,1}(B)+1,
\]
thus $h^{1,1}(W)=h^{1,1}(\mathbb{P}(\cE))$ for $W$ of arbitrary dimension. In particular, $W$ satisfies LTP when $W$ is a 3-fold, i.e., when $B$ is a rational surface. 

For the 4-fold case, we also need to consider $h^{1,2}$. By assumption, $B$ is a toric Fano 3-fold (there are 18 of them), and in such a case, one may use toric methods to compute $h^{1,2}(W)$. This was done in \cite[$\S$5, Table 1]{KLRY}, and the result is that $h^{1,2}(W) = 0$ in all cases. Since $h^{p, q}(B) = 0$ if $p \neq q$ by \cite[Theorem 9.3.2 or Theorem 9.4.9]{CLS}, it follows from item \eqref{pbf} of Lemma \ref{lemma: hdg bundle} that $h^{1, 2}(\IP(\cE)) = 0$. Hence $W$ satisfies LTP.
\end{proof}

\subsection{LTP in the \texorpdfstring{$E_7$}{E7} case}\label{LTPE7}

Let $X\to B$ be an $E_7$ fibration, which has two natural sections: $\sigma_0:B\hookrightarrow X$ and $\sigma_1:B\hookrightarrow X$ given by
\[
\sigma_0(b)=(0:1:0)\in \pi^{-1}(b) \quad \text{and} \quad \sigma_1(b)=(1:0:0)\in \pi^{-1}(b).
\]
We take $\sigma_0$ to be the distinguished section of the fibration, while $\sigma_1$ is a generator of $\MW(X)$, which is generically of rank $1$.

\begin{lemma}\label{LH11E7}
Let $X\to B$ be a generic $E_7$ fibration. Then $h^{1,1}(X)=h^{1,1}(B)+2$.
\end{lemma}
\begin{proof}
Over a generic point of the discriminant an $E_7$ fibration the singular fiber is a nodal cubic, which enhances to a cuspidal cubic or splits in two rational curves in higher codimension in the discriminant. As a consequence there is no irreducible and reduced fibral divisor which does not meet the section. Moreover, since the Mordell--Weil rank of an $E_7$ fibration is generically 1, it follows from the Shioda--Tate--Wazir formula for elliptic Calabi--Yaus \eqref{STWCY} that
\[
h^{1,1}(X)=h^{1,1}(B)+2,
\]
as desired.
\end{proof}

For the verification of LTP, we now construct the LTP map $\widetilde{\mathbb{P}(\mathscr{E})}\overset{\text{LTP}}\longrightarrow \mathbb{P}(\mathscr{E})$ which yields a crepant resolution of the Weierstrass model of $X$, which is given by
\[
W_X:\left(t^2 u = s^3 - \left( \frac{1}{3} c_2^2 -4c_4\right) s u^2 + \left( c_3^2 - \frac{2}{27} c_2^3 - \frac{8}{3} c_2c_4 \right) u^3\right)\subset \IP(\cE).
\]
In the Weierstrass model, the section $\sigma_0$ corresponds to the section $s_0(b) = (0: 1: 0)$ while $\sigma_1$ corresponds to $s_1(b) = (2c_2: -3c_3: 3)$. To construct a crepant resolution $\widetilde{W}_X\to W_X$, we blow up $\IP(\cE)$ along $s_1(B)$, which is given by the equations
\[
s_1(B):(t +c_3u = 3s - 2c_2u = 0)\subset \IP(\cE).
\] 
Taking the proper transform of $W_X$ then yields a resolution $\widetilde{W_X}\to W_X$. This resolution is crepant since $s_1(B)$ is a divisor in $W_X$ passing through the singular points of $W_X$, so the blow up only modifies the variety by introducing a $\IP^1$ over each singular point.

\begin{lemma}\label{LPB771}
Let $\widetilde{\IP(\cE)}$ denote the blow up of $\IP(\cE)$ along $\sigma_1(B)$. Then
\[
h^{p, q}(\widetilde{\IP(\cE)}) = h^{p, q}(B) + 2 h^{p - 1, q - 1}(B) + h^{p - 2, q - 2}(B).
\]
\end{lemma}
\begin{proof}
By formula \eqref{blowuphdp} we have
\[
E_{\widetilde{\IP(\cE)}}(u,v)=E_{\IP(\cE)}(u,v)+uvE_B(u,v),
\]
and since $E_{\IP(\cE)}(u,v)=(1+uv+(uv)^2)E_B(u,v)$, we then have
\[
E_{\widetilde{\IP(\cE)}}(u,v)=(1+2uv+(uv)^2)E_B(u,v),
\]
from which the lemma follows.
\end{proof}

\begin{theorem}\label{LTPE747}
Let $X\to B$ be a generic 3- or 4-fold $E_7$ fibration. In the 4-fold case, assume further that $B$ is a Fano toric 3-fold. Then $X$ satisfies LTP.
\end{theorem}
\begin{proof}
Let $\widetilde{\mathbb{P}(\mathscr{E})}\overset{\text{LTP}}\longrightarrow \mathbb{P}(\mathscr{E})$ be the LTP map correpsonding to the crepant resolution of $W_X$ constructed above. By Lemma~\ref{LPB771}, 
\[
h^{1, 1}(\widetilde{\IP(\cE)})= h^{1, 1}(B) + 2 h^{0, 0}(B)=h^{1, 1}(B) + 2,
\]
and 
\[
h^{1, 2}(\widetilde{\IP(\cE)})= h^{1, 2}(B) + 2 h^{0, 1}(B).
\]
By Lemma~\ref{LH11E7} it then immediately follows that $h^{1, 1}(\widetilde{\IP(\cE)})=h^{1,1}(X)$ in both the 3- and 4- fold cases. Thus $X$ satisfies LTP in the 3-fold case, as $h^{1, 1}$ is the only relevant Hodge number (cf. Remark~\ref{RM1X}). For the 4-fold case, the fact that $h^{1,2}(B)=h^{0,1}(B)=0$ for all Fano toric 3-folds (see \cite[Theorem 9.3.2 or Theorem 9.4.9]{CLS}) implies $h^{1, 2}(\widetilde{\IP(\cE)})=0$. Since $h^{1, 2}(X) = 0$ by \cite[$\S$5, Table 1]{KLRY}, $X$ satisfies LTP in the 4-fold case as well.
\end{proof}

\subsection{LTP in the \texorpdfstring{$E_6$}{E6} case}\label{LTPE6}

Let $Y\to B$ be an $E_6$ fibration, which has three natural sections $\sigma_i: B\to Y$ (which are also sections of $W_Y$) given by
\[
\sigma_i(b) = (-\zeta_3^i: 1: 0)\in \pi^{-1}(b), \qquad i = 0, 1, 2,
\]
where $\zeta_3=e^{\frac{2}{3}\pi \sqrt{-1}}$. We take $\sigma_0$ to be the distinguished section.

\begin{lemma}\label{LH11E6}
Let $Y\to B$ be a generic $E_6$ fibration. Then $h^{1,1}(Y)=h^{1,1}(B)+3$.
\end{lemma}
\begin{proof}
The reducible fibers of $E_6$ fibrations appear over loci of codimension greater than one in $B$, thus singular fibers of $E_6$ fibrations do not contribute to $h^{1,1}$. Since the Mordell--Weil rank of $E_6$ fibrations is 2, the Shioda--Tate--Wazir formula for elliptic Calabi--Yaus \eqref{STWCY} yields
\begin{equation}\label{H11E6}
h^{1,1}(Y)=h^{1,1}(B)+3.
\end{equation}
as desired.
\end{proof}

For the verification of LTP, we now construct the LTP map $\widetilde{\mathbb{P}(\mathscr{E})}\overset{\text{LTP}}\longrightarrow \mathbb{P}(\mathscr{E})$ which yields a crepant resolution of the Weierstrass model of $Y$, which is given by
\[
W_Y:\left(t^2 u = s^3 - F s u^2 + G u^3 \right)\subset \IP(\cE),
\]
where
\[F = \frac{1}{48} b_1^4 - \frac{9}{2} b_1 b_3 + 3 b_2 e_2, \qquad G = \frac{1}{864} b_1^6 + \frac{5}{8} b_1^3 b_3 - \frac{3}{4} b_1^2 b_2 e_2 + b_2^3 - \frac{27}{4} b_3^2 + e_2^3.\]
Apart the obvious constant section $s_0: B \rightarrow W_Y$ defined by $s_0(b) = (0: 1: 0)$ which corresponds to $\sigma_0$, we have two other sections $s_1$ and $s_2$ of $W_Y$ corresponding to $\sigma_1$ and $\sigma_2$ respectively. Let
\[\begin{array}{l}
\alpha_1 = \frac{1}{4} b_1^3 + \zeta_3 b_2 + \zeta_3^2 e_2,\\
\alpha_2 = \frac{1}{4} b_1^3 + \zeta_3^2 b_2 + \zeta_3 e_2,\\
\beta_1 = \frac{1}{18} \zeta_3 (1 - \zeta_3) b_1^3 - \frac{1}{2} (1 - \zeta_3^2) b_1 b_2 + \frac{1}{2} (1 - \zeta_3) b_1 e_2 + \frac{3}{2} \zeta_3 (1 - \zeta_3) b_3,\\
\beta_2 = -\frac{1}{18} \zeta_3 (1 - \zeta_3) b_1^3 - \frac{1}{2} (1 - \zeta_3) b_1 b_2 + \frac{1}{2} (1 - \zeta_3^2) b_1 e_2 - \frac{3}{2} \zeta_3 (1 - \zeta_3) b_3,\\
\end{array}\]
then $s_1 = (\alpha_1: \beta_1: -1)$ and $s_2 = (\alpha_2: \beta_2: -1)$.

To investigate the nature of the singularities of $W_Y$, we first analyze the singular fiber structure of $E_6$ fibrations following closely \cite[$\S$2.2]{AE2}. Over the generic point of the discriminant locus the singular fiber of the fibration is a nodal cubic curve, and they do not contribute to the singularities in the Weierstrass model. These curves can degenerate either to cuspidal curves or to fibers of type $I_2$ given by the union of a conic and a line. The equations for the locus over which we find fibers of type $I_2$ are
\[\Delta_Q^\rho : \left\{ \begin{array}{l}
e_2 = \rho b_2\\
b_3 = \frac{1}{27} b_1(9 \rho^2 b_2 - b_1^2)
\end{array} \right. \qquad \rho^3 \in \{ 1, \zeta_3, \zeta_3^2 \},\]
over which the fibers of the fibration are given by
\[
I_2:\left(x + \rho y + \frac{1}{3} \rho^2 b_1 z\right) \left(x^2 - \rho xy - \frac{1}{3} \rho^2 b_1 xz + \rho^2 y^2 - \frac{1}{3} b_1 yz + \left( \frac{1}{9} \rho b_1^2 - b_2 \right) z^2\right) = 0,
\]
which is (generically) the union of a line and a conic. We then have that generically over $\Delta_Q^\rho$, the sections $\sigma_i$ meet the fiber in either the line or the conic according to Table \ref{tab: intersection table}.

\begin{table}
\begin{center}
\begin{tabular}{|c||c|c|c|}
\hline
 & $\rho = 1$ & $\rho = \zeta_3$ & $\rho = \zeta_3^2$\\
\hline
\hline
$\sigma_0$ & line & conic & conic\\
\hline
$\sigma_1$ & conic & line & conic\\
\hline
$\sigma_2$ & conic & conic & line\\
\hline
\end{tabular}
\caption{How the sections $\sigma_i$ meet the $I_2$ fibres in $E_6$ fibrations.}
\label{tab: intersection table}
\end{center}
\end{table}

The map to the Weierstrass model contracts the component of these fibers which does not intersect $\sigma_0(B)$, and the corresponding points are then singular points of the Weierstrass model. It is then possible to see that the singular locus of the Weierstrass model has three irreducible components (one for each choice of $\rho$), with equations
\[
\text{Sing}_\rho: \left\{ \begin{array}{l}
e_2 = \rho b_2\\
b_3 = \frac{1}{27} b_1 (9\rho^2 b_2 - b_1^2)\\
t = 0\\
s + b_1^2 (b_1^2 - 4 \rho^2 b_2) = 0
\end{array} \right. \qquad \rho^3 \in \{ 1, \zeta_3, \zeta_3^2 \}.\]
The singular locus of $W_Y$ is then of codimension $4$ in $\mathbb{P}(\mathscr{E})$, and the irreducible components $\text{Sing}_\rho$ are contained in the sections $\Sigma_1 = s_1(B)$ and $\Sigma_2 = s_2(B)$ according to Table \ref{tab: containment table}.

\begin{table}
\begin{center}
\begin{tabular}{|c||c|c|c|}
\hline
 & $\rho = 1$ & $\rho = \zeta_3$ & $\rho = \zeta_3^2$\\
\hline
\hline
$\Sigma_1$ & \checkmark & \checkmark & No\\
\hline
$\Sigma_2$ & \checkmark & No & \checkmark\\
\hline
\end{tabular}
\caption{Which component of $\text{Sing}_\rho$ is contained in $\Sigma_i$?}
\label{tab: containment table}
\end{center}
\end{table}

We can then resolve the singularities of $W_Y$ as follows. First, we choose a section between $\Sigma_1$ and $\Sigma_2$, say $\Sigma_1$, and blow up the ambient space along it. Then we consider the proper transform of the Weierstrass fibration and we observe that we have no more singular points over $\Delta_Q^1$ and $\Delta_Q^{\zeta_3}$. The remaining singular points then lie over $\Delta_Q^{\zeta_3^2}$, and are contained in the proper transform of $\Sigma_2$. This proper transform is no longer isomorphic to the base $B$, rather it is the blow up of $B$ with center in $b_2 - e_2 = b_1^3 - 9 b_1 b_2 + 27 b_3 = 0$. After blowing up the ambient space along this subvariety, the proper transform of the fibration is smooth. This resolution of the Weierstrass model is crepant: the first blow up is along a divisor in $W_Y$ passing through the singular points of $W_Y$, so the blow up does not change the variety in codimension $1$ and does not introduce new divisors, and similarly for the second blow up.

In summary, we start with the ambient space $Z_0=\IP(\cE)$, and blow up a codimension $2$ subvariety isomorphic to $B$. Then in the new ambient space $Z_1$, we blow up a codimension $2$ subvariety isomorphic to the blow up $\widetilde{B}\to B$ along a codimension $2$ subvariety $C$, resulting in the ambient space $Z_2$, which we denote by $\widetilde{\mathbb{P}(\mathscr{E})}$. Taking the proper transform of $W_Y$ then yields a crepant resolution $\widetilde{W}_Y\to W_Y$, which is embedded as a hypersurface in $\widetilde{\mathbb{P}(\mathscr{E})}$.

\begin{lemma}\label{HNE6971}
Let $\widetilde{\mathbb{P}(\mathscr{E})}$ be as above. Then
\[
h^{p, q}(\widetilde{\IP(\cE)})= h^{p, q}(B) + 3h^{p- 1 , q - 1}(B) + h^{p - 2, q - 2}(B) + h^{p - 2, q - 2}(C).
\]
\end{lemma}
\begin{proof}
This is an application of \eqref{fbf} and \eqref{blowuphdp}.
\end{proof}

\begin{theorem}\label{LTPE647}
Let $Y\to B$ be a generic 3- or 4-fold $E_6$ fibration. In the 4-fold case, assume further that $B$ is a Fano toric 3-fold. Then $Y$ satisfies LTP.
\end{theorem}
\begin{proof}
Let $\widetilde{\mathbb{P}(\mathscr{E})}\overset{\text{LTP}}\longrightarrow \mathbb{P}(\mathscr{E})$ be the LTP map corresponding to the crepant resolution of $W_Y$ described above. By Lemma~\ref{LH11E6} and Lemma~\ref{HNE6971} we have
\[
h^{1,1}(\widetilde{\IP(\cE)})=h^{1,1}(B)+3=h^{1,1}(Y).
\]
Thus $Y$ satisfies LTP in the 3-fold case (recall that by Remark~\ref{RM1X} the only relevant Hodge number for LTP in the 3-fold case is $h^{1, 1}$). For the 4-fold case, Lemma~\ref{HNE6971} yields
\[
h^{1,2}(\widetilde{\IP(\cE)})=h^{1,2}(B)+3h^{0,1}(B),
\]
and since by Lemma~\ref{lemma: hdg inequality} we have $h^{1,0}(B) = 0$, in the 4-fold case $E_6$ fibrations satisfy LTP if and only if $h^{1,2}(\widetilde{\IP(\cE)}) = h^{1,2}(B)$, where we recall $B$ is a toric Fano 3-fold. Toric methods (see \cite[Theorem 9.3.2 or Theorem 9.4.9]{CLS}) may then be used to show that $h^{1, 2}(B) = 0$. Since $h^{1, 2}(\widetilde{W_Y}) = 0$ by \cite[$\S$5, Table 1]{KLRY}, $Y$ satisfies LTP in the 4-fold case as well.
\end{proof}

\section{LTP for the Borcea--Voison 4-fold}
While the $E_8$, $E_7$ and $E_6$ families are all defined over a base $B$ of arbitrary dimension, the next elliptic fibration we consider is an explicit 4-fold construction, introduced in \cite{CGP}. 

\begin{definition}[The Borcea-Voison 4-fold]
\label{BV4F}
Let $S_1$ and $S_2$ be two K3 surfaces, and suppose
\begin{enumerate}
\item $S_1$ admits an elliptic fibration $\pi: S_1 \rightarrow \IP^1$;
\item $S_2$ is a double covering of a del Pezzo surface.
\end{enumerate}
Both surfaces admit a natural involution: the surface $S_1$ has the hyperelliptic involution $\iota_1$, while $S_2$ has the covering involution $\iota_2$. The elliptic Calabi--Yau 4-fold $Z\to \mathbb{P}^2\times \mathbb{P}^1$ corresponding to the crepant resolution of the singular quotient $(S_1 \times S_2) / (\iota_1 \times \iota_2)$ will be referred to as the \emph{Borcea--Voisin 4-fold}.
\end{definition} 

Let $Z\to \mathbb{P}^2\times \mathbb{P}^1$ be the Borcea--Voisin 4-fold as given by Definition~\ref{BV4F}. We consider the case where the elliptic fibration on $S_1$ has only nodes or cusps as singular fibers (say $n$ singular fibres of type $II$ and $24 - 2n$ of type $I_1$), while $S_2$ is the double cover of $\IP^2$ branched along a smooth sextic. For the verification of LTP we first need the following

\begin{lemma}\label{LBV}
Let $Z$ be the Borcea--Voisin 4-fold constructed from an elliptic K3 surface with $n$ singular fibers of type $II$ and $24 - 2n$ of type $I_1$ and a double cover of $\IP^2$ branched along a smooth sextic. Then $h^{1, 1}(X)=5$ and
$h^{1, 2}(X) = 30$.
\end{lemma}
\begin{proof}
To prove the lemma one needs first to study the fixed locus of the two natural involutions on $S_1$ and $S_2$, and then to follow the quotient and blow up needed to produce $X$ from $S_1 \times S_2$. We refer to \cite[Proposition 4.2]{CGP} for all the details, as the proof of the lemma follows from the proof of \cite[Proposition 4.2]{CGP} with the obvious adjustments.
\end{proof}

Let $\cE=\cO_{\IP^2\times \IP^1}\oplus \omega_{\IP^2\times \IP^1}^{-2}\oplus \omega_{\IP^2\times \IP^1}^{-3}$. For the verification of LTP, we now construct an LTP map $\widetilde{\mathbb{P}(\mathscr{E})}\overset{\text{LTP}}\longrightarrow \mathbb{P}(\mathscr{E})$ which yields a crepant resolution $\widetilde{W}_Z\to W_Z$ of the Weierstrass model $W_Z$ of the Borcea--Voisin 4-fold $Z\to \mathbb{P}^2\times \mathbb{P}^1$. The Weierstrass model of $Z$ is given by
\[
W_Z:(y^2z=x^3+(Af^2)xz^2+(Bf^3)z^3)\subset \mathbb{P}(\cE),
\]
where $f=0$ is the sextic curve $C\subset \IP^2$ which is the branch locus of $S_2\to \IP^2$, and $A$ and $B$ are such that the Weierstrass model of $S_1\to \IP^1$ is given by $s^2t=u^3+Aut^2+Bt^3$. For the sake of constructing a crepant resolution of $W_Z$ we make the assumption that $2\partial B/B\neq 3\partial A/A$.  

The Weierstrass model $W_Z$ is singular along the smooth surface $S\subset \mathbb{P}(\cE)$ given by $x=y=f=0$, and we now construct an explicit crepant resolution $\widetilde{W}_Z\to W_Z$. For this, we first blow up $\IP(\cE)$ along $S$, and we let $(X_1:X_2:F)$ denote the coordinates in the exceptional divisor of the blow up. In the chart $X_2= 1$, we have $ x = X_1y$ and $f = Fy$, so that the exceptional divisor is given by $y = 0$, and the proper transform of $W_Z$ is given by
\[
1=y(X_1^3+AF^2X_1+BF^3),
\]
thus the exceptional divisor and the proper transform are disjoint. In the chart $X_1= 1$, we have $ y = X_2x$ and $f = Fx$, thus the exceptional divisor is given by $x = 0$ and the proper transform of $W_Z$ is given by
\[
 X_2^2 = x(1 + AF^2 + BF^3). 
 \]
The proper transform of $W_Z$ is then singular along $x= X_2 = 1 + AF^2 + BF^3 = 0$. In the chart $F = 1$, we have $x = X_1f$ and $y = X_2f$, so that the exceptional divisor is given by $f = 0$ and the proper transform of $W_Z$ is given by
\[
X_2^2 = f(X_1^3 + AX_1 + B),
\]
which is singular along the surface
\[
T:(f =X_2= X_1^3 + AX_1 + B = 0)\subset Z_1,
\]
where $Z_1$ denotes the blow up of $\IP(\cE)$ along $S$. Observe that this description of the singular locus patches with the one in the previous chart. Since in the previous chart there is no singular point contained in $F = 0$, we see that the whole singular locus is described in this chart. The previous assumption that $2\partial B/B\neq 3\partial A/A$ assures that $T$ is in fact smooth. Moreover, the blow up of $Z_1$ along $T$ yields a crepant resolution of $\widetilde{W}_Z\to W_Z$ by taking the proper transform of $W_Z$ through the two blow ups. It is possible to see by direct computations that this resolution is crepant. By Lemma~\ref{LBV}, the Borcea--Voison 4-fold $Z\to \mathbb{P}^2\times \mathbb{P}^1$ then satisfies LTP if and only if $h^{1,1}(\widetilde{\IP(\cE)})=5$ and $h^{1,2}(\widetilde{\IP(\cE)})=30$, where $\widetilde{\IP(\cE)}$ denotes the blow up of $Z_1$ along $T$.

\begin{theorem}\label{LTPVB647}
The Borcea-Voison 4-fold $Z\to \mathbb{P}^2\times \mathbb{P}^1$ satisfies LTP.
\end{theorem}
\begin{proof}
Let $\widetilde{\mathbb{P}(\mathscr{E})}\overset{\text{LTP}}\longrightarrow \mathbb{P}(\mathscr{E})$ be the LTP map correpsonding to the crepant resolution of $W_Z$ constructed above. By formulas \eqref{h11pb} and \eqref{h12pb} we have
\[
h^{1,1}(\widetilde{\IP(\cE)})=h^{1,1}(\IP^2\times \IP^1)+3=5,
\]
and 
\[
h^{1,2}(\widetilde{\IP(\cE)})=h^{1,2}(\IP^2\times \IP^1)+h^{1,0}(S)+h^{1,0}(T)=h^{1,0}(S)+h^{1,0}(T),
\]
where the last equality follows from the fact that $h^{1,2}(\IP^2\times \IP^1)=0$. We then have a match at the level of $h^{1,1}$, thus the Borcea--Voisin 4-fold satisfies LTP if and only if $h^{1,0}(S)+h^{1,0}(T)=30$.

Now the surface $S$ is isomorphic to $C\times \IP^1$ (where we recall $C$ is a smooth sextic curve of in $\IP^2$), and since $C$ is of genus $h^{1,0}(C)=10$, it then follows from equation \eqref{fbf} that $h^{1,0}(S)=10$. As for the surface $T$, we note that this surface is isomorphic to the product of $C$ with the $3:1$ cover of $\IP^1$ with $n$ ramification points with multiplicity $3$ and $24 - 2n$ ramification points with multiplicity $2$, which by the Riemann--Hurwitz formula is a curve of genus 10. It then follows that $h^{1,0}(T)=10 +10=20$, which yields
\[
h^{1,2}(\widetilde{\IP(\cE)})=10+20=30,
\]
thus $Z$ satisfies LTP.
\end{proof}

\section{LTP and the Calabi--Yau condition}\label{CYCON}

In this section we show that if we drop the Calabi--Yau condition on the total space of the elliptic fibration, then the Lefschetz-type phenomenon can fail to hold.

Let $W \to \IP^1$ be a smooth elliptic $K3$ surface given by a Weierstrass equation
\begin{equation}\label{eq: k3 example}
W: (T^2 U = S^3 + \alpha S U^2 + \beta U^3) \subset \IP(\cO_{\IP^1} \oplus \cO_{\IP^1}(4) \oplus \cO_{\IP^1}(6)),
\end{equation}
where $\alpha \in H^0(\IP^1, \cO_{\IP^1}(8))$, $\beta \in H^0(\IP^1, \cO_{\IP^1}(12))$ are generic, and let $\pi:X\to \IP^1 \times \IP^n$ be the elliptic fibration whose total space $X$ is the cartesian product $W\times \IP^n$ for $n>0$. It then follows that $\pi:X\to \IP^1 \times \IP^n$ is a smooth Weierstrass fibration, whose equation is given by 
\[
X: (y^2 z = x^3 + \alpha x z^2 + \beta z^3) \subset Z,
\]
where $Z = \IP(\cO_{\IP^1 \times \IP^n}\oplus \cL^2 \oplus \cL^3)$ and $\cL=\cO_{\IP^1 \times \IP^n}(2, 0)$. From the adjunction formula one may deduce that $\omega_X$ is non-trivial, so that $X$ is not Calabi--Yau. As $X$ is a product and $Z$ is a $\IP^2$-bundle, we can easily compute the Hodge diamonds of $X$ and $Z$. In particular, at the level of $h^{1,1}$ we have
\[h^{1, 1}(X) = 21, \quad \text{and} \quad  h^{1, 1}(Z) = 3,
\]
thus $X$ does \emph{not} satisfy LTP.

\begin{rem}
Observe that even though the total space $X$ of the fibration $\pi$ is a product, $X$ is not birational over $\IP^1 \times \IP^n$ to an elliptic fibration of the form $\IP^1 \times \IP^n \times E \rightarrow \IP^1 \times \IP^n$ ($E$ is an elliptic curve). To see that this is true, assume that we have a diagram
\[\xymatrix{W \times \IP^n \ar@{-->}[rr]^f \ar[dr] & & \IP^1 \times \IP^n \times E \ar[dl]\\
 & \IP^1 \times \IP^n. & }\]
The indeterminacy loci of $f$ and of $f^{-1}$ are of codimension at least $2$, hence their image in $\IP^1 \times \IP^n$ is contained in a divisor. So for a generic point $P \in \IP^n$ the restriction of $f$ to $\IP^1 \times \{ P \}$ will exhibit $W$ as birational to a product, which is not the case.
\end{rem}

\section{LTP conjectures}\label{LTPCON}

We now formulate two conjectures, which we will refer to as the `LTP-weak conjecture' and the `LTP-strong conjecture'. A positive answer to the latter would imply a positive answer to the former.

\begin{conj}[LTP-weak conjecture]\label{C117}
An elliptic Calabi--Yau satisfies LTP.
\end{conj}

If Conjecture~\ref{C117} is in fact true, it is then natural to surmise that LTP for an elliptic Calabi--Yau $X\to B$ is a consequence of the inclusion map $i:\widetilde{W}_X\hookrightarrow \widetilde{\mathbb{P}(\mathscr{E})}$ associated with an LTP map $\widetilde{\mathbb{P}(\mathscr{E})}\overset{\text{LTP}}\longrightarrow \mathbb{P}(\mathscr{E})$ for $X$ inducing an isomorphism $i^*: H^k(\widetilde{\mathbb{P}(\mathscr{E})}, \mathbb{Z}) \rightarrow H^k(\widetilde{W}_X, \mathbb{Z})$ for $k < \dim X$. However, this more general statement may fail to hold, as we now show.

\begin{prop}\label{CE119}
Let $X\to B$ be an elliptic Calabi--Yau which satisfies LTP, and let $i:\widetilde{W}_X\hookrightarrow \widetilde{\mathbb{P}(\mathscr{E})}$ be the inclusion map associated with an LTP map $\widetilde{\mathbb{P}(\mathscr{E})}\overset{\emph{LTP}}\longrightarrow \mathbb{P}(\mathscr{E})$ for $X$. If the Mordell-Weil group of $X\to B$ admits torsion, then the map $i^*: H^2(\widetilde{\mathbb{P}(\mathscr{E})}, \mathbb{Z}) \rightarrow H^2(\widetilde{W}, \mathbb{Z})$ induced by the inclusion map $i$ is not an isomorphism.
\end{prop}

\begin{proof}
Since the Mordell-Weil group of $X\to B$ admits torsion, there exists a rational section $\tau$ of $X\to B$ of finite order. From the surjective group homomorphism $\NS(\widetilde{W}_X)\to \MW(\widetilde{W}_X)$ which associates a class $D$ with its restriction to the generic fiber of $X\to B$, we deduce that there exists in $\NS(\widetilde{W}_X)$ a class of finite order, namely $\overline{\tau(B)}$. As $\widetilde{W}_X$ is an elliptic Calabi--Yau, we have $\NS(\widetilde{W}_X) = H^2(\widetilde{W}_X, \IZ)$, so that $H^2(\widetilde{W}_X, \IZ)$ admits torsion. But $H^2(\widetilde{\mathbb{P}(\mathscr{E})}, \IZ)$ is torsion-free, thus it can not be isomorphic to $H^2(\widetilde{W}_X, \IZ)$.
\end{proof}

In light of Proposition~\ref{CE119}, perhaps LTP is a consequence of LTP maps inducing an isomrphism of \emph{rational} Hodge structures (in the appropriate dimensions). In particular, we make the following conjecture, which refer to as the `LTP-strong conjecture'.

\begin{conj}[LTP-strong conjecture]
Let $X\to B$ be an elliptic Calabi--Yau. Then the following statements hold.
\begin{enumerate}
    \item 
    An LTP map $\widetilde{\mathbb{P}(\mathscr{E})}\overset{\emph{LTP}}\longrightarrow \mathbb{P}(\mathscr{E})$ for $X$ exists.
    \item
    Given an LTP map $\widetilde{\mathbb{P}(\mathscr{E})}\overset{\emph{LTP}}\longrightarrow \mathbb{P}(\mathscr{E})$, the map $i^*: H^k(\widetilde{\mathbb{P}(\mathscr{E})}, \IQ) \rightarrow H^k(\widetilde{W}_X, \IQ)$ induced by the inclusion $i: \widetilde{W}_X \hookrightarrow \widetilde{\mathbb{P}(\mathscr{E})}$ associated with the LTP map is an isomorphism for $k < \dim X$.
\end{enumerate}
\end{conj}

\bibliographystyle{plain}
\bibliography{LTP_FTV}

\end{document}